\newcommand{\NN}{\mathbb{N}}
\newcommand{\RR}{\mathbb{R}}
\newcommand{\CC}{\mathbb{C}}
\newcommand{\ZZ}{\mathbb{Z}}
\newcommand{\QQ}{\mathbb{Q}}
\newcommand{\rar}{\rightarrow}
\newcommand{\mc}{\mathcal}
\newcommand{\E}{\mathcal{E}}
\renewcommand{\S}{\mathcal{S}}
\newcommand{\R}{\mathcal{R}}
\newtheorem*{theorem*}{Theorem}
\newtheorem{theorem}{Theorem}
\newtheorem{lemma}[theorem]{Lemma}
\newtheorem{proposition}[theorem]{Proposition}
\newtheorem{remark}[theorem]{Remark}
\newtheorem{question}{Question}
\newtheorem*{THMA}{Theorem A}
\newtheorem*{THMB}{Theorem B}
\newtheorem*{THMAPRIME}{Theorem A'}
\newtheorem*{THMBPRIME}{Theorem B'}
\newtheorem*{THMC}{Theorem C}
\newtheorem*{THM_BD}{Boshernitzan-Dyson Theorem}
\begin{document}

\title[Level
spacing statistics for the multi-dimensional quantum harmonic oscillator]{Level
spacing statistics for the multi-dimensional quantum harmonic oscillator:
algebraic case}

\author{Alan Haynes, Roland Roeder}
\date{\today}

\thanks{MSC 2020: 81Q50,  11J13, 11J17}

\keywords{Quantum Harmonic Oscillator, Energy Level Statistics, Steinhaus problem, Three Gap Theorem}

\begin{abstract}
We study the statistical properties of the spacings between neighboring energy
levels for the multi-dimensional quantum harmonic oscillator that occur in a
window $[E,E+\Delta E)$ of fixed width $\Delta E$ as $E$ tends to infinity.  This regime provides a notable exception to the Berry-Tabor Conjecture from Quantum Chaos and,
for that reason, it was studied extensively by Berry and Tabor in their seminal
paper from 1977.  We focus entirely on the case that the (ratios of)
frequencies $\omega_1,\omega_2,\ldots,\omega_d$ together with $1$ form a basis
for an algebraic number field $\Phi$ of degree $d+1$, allowing us to use tools
from algebraic number theory.  This special case was studied by Dyson, 
Bleher, Bleher-Homma-Ji-Roeder-Shen, and others.  Under a suitable rescaling,
we prove that the distribution of spacings behaves asymptotically
quasiperiodically in $\log E$.  We also prove that the distribution of ratios
of neighboring spacings behaves asymptotically quasiperiodically in $\log E$.
The same holds for the distribution of finite words in the finite alphabet of
rescaled spacings.

Mathematically, our work is a  higher dimensional version of the Steinhaus
Conjecture (Three Gap Theorem) involving the fractional parts of a linear form in
more than one variable, and it is of independent interest from this
perspective.
\end{abstract}

\maketitle

\section{Introduction}
We study the statistical properties of spacings (also called gaps and nearest neighbor distances) between neighboring
energy levels for the multi-dimensional quantum harmonic oscillator in the
limit as the energy tends to infinity.  This is a fundamental problem from
physics whose motivations date back to the origins of the study of Quantum Chaos.

Our main techniques are from algebraic number theory and to make these
techniques accessible we will focus entirely on the case that the (ratios of)
frequencies $\omega_1,\omega_2,\ldots,\omega_d$ together with $1$ form a basis
for an algebraic number field $\Phi$ of degree $d+1$.  (Here the dimension of the harmonic oscillator is $d+1$.)
This is a rather special
case, however it is also very important because of the emphasis placed
on it by previous works of Berry and Tabor \cite{BT}, Dyson \cite{Dyson}, Bleher
\cite{B1}, and others.

Mathematically, the problem reduces to studying the fractional parts of a
linear form in more than one variable, which is a topic of independent interest
in number theory.  Study of this topic provides multi-dimensional analogs and extensions of the famous
Three Gap Theorem (also called the Steinhaus Conjecture) of S\'{o}s
\cite{Sos1957},  Sur\'{a}nyi \cite{Sura1958}, and \'{S}wierczkowski
\cite{Swie1959}.  

In order to reach readers from both of these perspectives we will now describe the
motivations, background, and also state our results in both contexts.  

\subsection{Physical Perspective}\label{SUBSEC:PHYSICAL}
Given a quantum-mechanical system (or equivalently a Hamiltonian~$H$),
one of the central problems from Quantum Chaos is to study the distribution of spacings between neighboring energy levels
in the limit as the energy $E$ tends to infinity.  One typically considers all energy levels 
in a window
\begin{align*}
E \leq e_1 < e_2 < \ldots < e_\ell < E + \Delta E,
\end{align*}
where the width $\Delta E$ is fixed and one takes the limit as $E \rightarrow
\infty$.  In this way, the ordered energy levels $e_1(E) < e_2(E) < \ldots <
e_{\ell(E)}(E)$ all depend on the energy $E > 0$ as does the number $\ell
\equiv \ell(E)$ of them.   One is interested in the statistics of the distances between neighboring energy levels
\begin{align*}
\delta_i(E) := e_{i+1}(E) - e_{i}(E)\qquad \mbox{for $1 \leq i \leq \ell(E)-1$},
\end{align*}
and their fluctuations in the limit as $E \rightarrow \infty$.  
As $E$ increases, the size of these spacings decrease.  However, 
one can multiply by a factor $\lambda(E)$ in order to produce normalized spacings
\begin{align*}
\overline{\delta}_i(E) &:= \lambda(E) \delta_i(E) = \lambda(E) \left(e_{i+1}(E) - e_{i}(E)\right)  \qquad \mbox{for $1 \leq i \leq \ell(E)-1$},
\end{align*} 
where $\lambda(E)$ is chosen so that the normalized spacings $\overline{\delta}_i(E)$ have
average length equal to $1$.  This allows one to ask whether the normalized spacings are described by a limiting law
and to ask how that law depends on the nature of the system (Hamiltonian $H$) being considered.   We have:

\vspace{0.1in}
\noindent
{\bf Conjecture of Berry and Tabor (1977) \cite{BT}:}\\
If the classical dynamical system associated to the Hamiltonian $H$ is integrable, then 
the limiting distribution of the normalized spacings for quantum energy levels is governed by a Poisson law.

\vspace{0.1in}
\noindent
{\bf Conjecture of Bohigas, Giannoni, and Schmit (1984) \cite{BGS}:}\\
If the classical dynamical system associated to the Hamiltonian $H$ is chaotic, then
the limiting distribution of the normalized spacings for the quantum energy levels is governed by the eigenvalue statistics of one of the
three standard ensembles of random matrices, GOE, GUE, or GSE.

\vspace{0.1in}

\noindent
For more information on these conjectures and various exciting extensions of them to other contexts we recommend \cite{IMA_BOOK,BDL}.

\vspace{0.1in}
As discussed in the paper of Berry and Tabor \cite{BT}, the multi-dimensional quantum harmonic
oscillator serves as a notable exception to these conjectures.  Even though the
classical multi-dimensional harmonic oscillator is completely integrable, the asymptotic behavior of the
normalized spacings distributions is not governed by a Poisson law because
the energy level contours are flat.  Rather, it depends dramatically on the
arithmetic properties of the ratio of frequencies.  A considerable portion of the paper by Berry and Tabor is devoted to this situation.  This was followed by the works of
 Dyson \cite{Dyson}, Boshernitzan \cite{Bosh1991,Bosh1992}, Bleher
\cite{B1,B2}, and others.

The multi-dimensional quantum harmonic oscillator is given by the Hamiltonian
\begin{align*}
H = -\displaystyle \sum_{j=1}^{d+1} \frac{\hbar ^2}{2m}\frac{\partial^2}{\partial x_j ^2} + \sum_{j=1}^{d+1} \frac{k_j}{2} x_j ^2.
\end{align*}
Applying Schr\"{o}dinger's equation, the quantum energy levels of the
system are determined by $d+1$ non-negative integers, $m_0, \ldots, m_d$, and
they are of the form
\begin{equation*}
e = e_0 + m_0\alpha_0 + \ldots + m_d\alpha_d,
\end{equation*}
where $\alpha_0, \ldots, \alpha_d$ are positive real numbers depending on the spring constant ${\bm k}$ and the mass $m$.
We have
\begin{equation*}
e = e_0 + \alpha_0(m_0 + m_1 \omega_1 + \ldots  + m_d \omega_d),
\end{equation*}
where $\omega_i = \alpha_i/\alpha_0 > 0$.  
We will refer to $\omega_1,\ldots,\omega_d$ the {\em ratios of frequencies}.

It is convenient to set 
$\Delta E = \alpha_0$ and to perform a linear change of variables to the energy scale, 
letting $\mathcal{E} = (E-e_0)/\alpha_0$, $\varepsilon = (e-e_0)/\alpha_0$, and hence $\Delta \E = 1$.  Then, the problem
becomes to find spacings between each of the neighboring energy levels
\begin{equation*}
\varepsilon = m_0 + m_1 \omega_1 + \ldots  + m_d \omega_d
\end{equation*}
lying between a given $\mathcal{E}$ and $\mathcal{E} + 1$ as $\mathcal{E} \rightarrow
\infty$.   All of our results will be described under this change of variables.

\vspace{0.05in}

The case $d=1$ corresponds to a two-dimensional harmonic oscillator and it was
carefully studied by Bleher \cite{B1,B2}, using methods involving continued
fractions.  He proved in \cite[Theorem 1.5]{B2} that for a generic ratio of the
frequencies $\omega \equiv \omega_1$ there is no limiting distribution of
normalized spacings.  Meanwhile in the special case that the ratio of
frequencies is the golden mean $\omega = (\sqrt{5}-1)/2$, Bleher explicitly
describes how the distribution of normalized spacings depends on $\E$; see
\cite[Theorem 3]{B1}.  From these formulae one can see that the distribution
behaves periodically in $\log \E$, asymptotically as $\E \rightarrow \infty$.
In other words, although there is no limit of the distribution of normalized
spacings, the asymptotic behavior remains rather simple.

Later it was proved by Bleher-Homma-Ji-Roeder-Shen in \cite[Theorem
1.6]{BlehHommJiRoedShen2012} that this phenomenon carries over to the three-dimensional
quantum harmonic oscillator when the ratios of frequencies satisfy that $1,
\omega_1, \omega_2$ form a basis for a cubic algebraic number field $\Phi$
that has one fundamental unit.  Rather than using continued fractions, which
do not generalize to higher dimensions, basic techniques from algebraic number
theory were used to prove that the distribution of normalized spacings behaves
asymptotically quasi-periodically in $\log \E$ as $\E \rightarrow \infty$.

In this paper we build on these previous results to prove:

\begin{THMA}
Suppose that the ratios of frequencies satisfy that $1,\omega_1,\ldots
,\omega_d\in\RR$ form a $\QQ$-basis for an algebraic number field $\Phi$.
We then have:

\begin{itemize}
\item[(1)] {\bf Uniform Labeling:}
There is a finite set 
$\S := \{s_1,\ldots,s_J\}  \subset \Phi$
and a rescaling factor $u({\mathcal E}) > 1$
 such that for any $\mathcal{E} > 1$ the rescaled spacings satisfy:
\begin{align*}
\overline{\delta_i}({\mathcal E}) := u({\mathcal E})(\varepsilon_{i+1}(\E) - \varepsilon_{i}(\mathcal{E})) \in \S \qquad \mbox{for $1 \leq i \leq \ell({ \mathcal E})-1$}.
\end{align*}

\item[(2)]
{\bf Quasiperiodicity:}
For each $1 \leq j \leq J$ let $p_j(\mathcal{E})$ denote the proportion of the rescaled spacings $\overline{\delta_i}({\mathcal E})$ such that $\overline{\delta_i}({\mathcal E}) = s_j \in \S$.  Then, 
there are integers $0 \leq k,r \leq d+1$, a Lipschitz continuous function
\begin{align*}
g: \mathbb{T}^k \times [0,1]^r \rightarrow P := \left\{(p_1,\ldots,p_J) \, : \sum p_j = 1, p_j \geq 0\right\},
\end{align*}
angles ${\bm \theta} = (\theta_1,\ldots,\theta_k) \in \mathbb{T}^k$, rates ${\bm \beta} = (\beta_1,\ldots,\beta_r) \in \mathbb{R}^r$, and $0 < \alpha < 1$ such that
\begin{align*}
(p_1({\mathcal E}),\ldots,p_J({\mathcal E})) = g\left({\bm \theta} \log {\mathcal E}, \{ {\bm \beta} \log {\mathcal E} \} \right) + \mathcal{O}(\alpha^{\log {\mathcal E}}).
\end{align*}
Here, we use the notation
\begin{align*}
\{ {\bm \beta} \log \E \} := \left(\{ \beta_1 \log \E \}, \ldots, \{ \beta_r \log \E \} \right) \in [0,1]^r,
\end{align*}
where $\{x\}$ denotes the fractional part of a real number $x$.
\end{itemize}
\end{THMA}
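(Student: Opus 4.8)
The plan is to recast the problem in the geometry of numbers of $\Phi$ and then to exploit the action of its unit group, which plays the role that continued fractions play when $d=1$. Let $\sigma_0=\mathrm{id},\sigma_1,\ldots,\sigma_d$ be the archimedean embeddings of $\Phi$, with $\sigma_0$ the distinguished real one in which the $\omega_i$ are positive, let $\Sigma=(\sigma_0,\ldots,\sigma_d)\colon\Phi\hookrightarrow\RR^{r_1}\times\CC^{r_2}$ be the Minkowski embedding (so $r_1+2r_2=d+1$), let $M=\ZZ+\ZZ\omega_1+\cdots+\ZZ\omega_d$ be the associated full lattice, $\mathcal O=\{x\in\Phi:xM=M\}$ its multiplier order, $\mathcal O^\times$ the unit group of rank $\rho=r_1+r_2-1$, and $M^+=\ZZ_{\geq0}+\ZZ_{\geq0}\omega_1+\cdots+\ZZ_{\geq0}\omega_d$ the positive cone. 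Then the energy levels in $[\mathcal E,\mathcal E+1)$ are precisely the values $\sigma_0(\gamma)$ with $\gamma\in M^+$ and $\sigma_0(\gamma)\in[\mathcal E,\mathcal E+1)$ — distinct $\gamma$ give distinct values because $1,\omega_1,\ldots,\omega_d$ are $\QQ$-independent — and the spacings are the numbers $\sigma_0(\gamma'-\gamma)$ over $\sigma_0$-consecutive such $\gamma<\gamma'$.

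The first step is a higher-dimensional Three Gap input: if $\gamma<\gamma'$ are consecutive then $w:=\gamma'-\gamma\in M$ is a one-sided best approximation of $0$, meaning $\sigma_0(w)\in(0,1)$ and no $w''\in M$ has $0<\sigma_0(w'')<\sigma_0(w)$ with internal part $(\sigma_1(w''),\ldots,\sigma_d(w''))$ no larger than that of $w$; otherwise $\gamma+w''$ would be a point of $M^+$ in the window strictly between $\gamma$ and $\gamma'$. Here the convex geometry of the window — the positive cone cut by a unit-thick slab, essentially the fixed cross-sectional simplex $S$ of $M^+$ dilated by a factor $\asymp\mathcal E$ — must be used directly, since for $d\geq2$ there is no naive Three Gap Theorem. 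It follows that the multiset of spacings at height $\mathcal E$ is governed by the finite set $\mathcal W(\mathcal E)$ of best approximations of internal size $\lesssim\mathcal E$, each weighted by the proportion of window points $\gamma$ at which it is realized; up to a relative error $O(1/\mathcal E)$ that proportion equals $\mathrm{vol}\bigl(S\cap(S-w/\mathcal E)\bigr)/\mathrm{vol}(S)$.

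Next, units organize $\mathcal W(\mathcal E)$. Choosing generators $v_1,\ldots,v_\rho$ of $\mathcal O^\times$ modulo $\{\pm1\}$ with $\sigma_0(v_j)>1$, multiplication by $v_j$ permutes $M$, multiplies $\sigma_0$ by $\sigma_0(v_j)$, and has norm $\pm1$; in the logarithmic embedding the best approximations of $M$ form a lattice-point set confined near a codimension-one wall on which $\mathcal O^\times$ is a cocompact group of translations, so it has only finitely many $\mathcal O^\times$-orbits. Hence there are finitely many $w_1,\ldots,w_m\in M$ such that every $w\in\mathcal W(\mathcal E)$ equals $v_1^{-n_1}\cdots v_\rho^{-n_\rho}w_a$ for some $a$ and exponents $n_j=n_j(\mathcal E)\approx c_j\log\mathcal E$, with the $c_j$ obtained by inverting the logarithmic-embedding matrix of the units. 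Taking $u(\mathcal E):=\prod_j\sigma_0(v_j)^{n_j(\mathcal E)}$ makes $u(\mathcal E)\,\sigma_0(w)=\sigma_0(w_a)\in\S:=\{\sigma_0(w_1),\ldots,\sigma_0(w_m)\}\subset\Phi$ for every spacing, and $u(\mathcal E)>1$. This proves part (1).

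Finally, diagonalizing the action of the $v_j$ on $\Phi\otimes\RR\cong\RR^{d+1}$, each acts on a non-distinguished real place by a real scalar and on each complex place by a similarity of modulus $<1$ and angle $\arg\sigma_\upsilon(v_j)$; hence the internal part of $w/\mathcal E=v_1^{-n_1}\cdots v_\rho^{-n_\rho}w_a/\mathcal E$ is a fixed vector, rescaled in each coordinate by a factor $\exp(O(1))$ and rotated in each complex coordinate by $\sum_j n_j(\mathcal E)\arg\sigma_\upsilon(v_j)$. Because $n_j(\mathcal E)=\lfloor c_j\log\mathcal E\rfloor$, all of this depends on $\log\mathcal E$ only through the fractional parts $\{c_j\log\mathcal E\}$, which supply a point of $[0,1]^r$ with $r\leq\rho$ and rates $\bm\beta$ built from the $c_j$, and through the angles $\bigl(\sum_j c_j\arg\sigma_\upsilon(v_j)/2\pi\bigr)\log\mathcal E\bmod1$, which supply a point of $\mathbb T^k$ with $k\leq r_2$ and these angles $\bm\theta$; in particular $k,r\leq d+1$. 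Since each $p_j(\mathcal E)$ is, via the volume ratios above, a piecewise-polynomial and hence Lipschitz function of the finite data $\{(\sigma_0(w),w/\mathcal E):w\in\mathcal W(\mathcal E)\}$, this yields a Lipschitz $g\colon\mathbb T^k\times[0,1]^r\to P$ with $(p_1(\mathcal E),\ldots,p_J(\mathcal E))=g(\bm\theta\log\mathcal E,\{\bm\beta\log\mathcal E\})+O(\alpha^{\log\mathcal E})$: the error combines the $O(1/\mathcal E)=O((1/e)^{\log\mathcal E})$ discretization term with a term decaying geometrically in $|(n_1,\ldots,n_\rho)|\asymp\log\mathcal E$, from best approximations pushed just across the boundary of the fundamental domain. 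I expect the two hard points to be the Three Gap step — proving consecutive differences are exactly the best approximations, which for $d\geq2$ needs a genuine argument with the convex geometry of the window rather than a known theorem — and the bookkeeping in the last paragraph, namely separating the compact rotational directions from the sawtooth directions in the simultaneous $v_j$-action on all non-distinguished places and controlling off-wall and boundary contributions at an exponential rate.
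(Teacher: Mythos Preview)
Your overall architecture matches the paper's: Minkowski embedding, unit-group renormalization, and a volume-of-convex-pieces approximation for the proportions. But there are two concrete problems.

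\textbf{Part (1).} The best-approximation step is both unnecessary and, as you state it, incorrect. Your argument that ``otherwise $\gamma+w''$ would be a point of $M^+$ in the window'' conflates two coordinate systems: membership in $M^+$ is a condition on the coefficients in the basis $1,\omega_1,\ldots,\omega_d$, while your hypothesis on $w''$ constrains its Minkowski components $(\sigma_1(w''),\ldots,\sigma_d(w''))$; neither implies the other. The paper bypasses this entirely. All that is needed is (i) $|\sigma_0(\Delta_i)|\le K'/t^d$, which follows from Perron's theorem that $\bm\omega$ is badly approximable together with the transference principle, and (ii) $|\sigma_j(\Delta_i)|\le Ct$ for $j\ne 0$, which is immediate since $\Delta_i=(\bm m-\bm m')\cdot\bm\omega$ with $\bm m,\bm m'\in M(t)$. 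Then one picks a unit $u(t)$ whose log-embedding lies within $O(1)$ of $(d\log t,-\log t,\ldots,-\log t)$, and $u(t)\Delta_i$ lands in a fixed box. No characterization of spacings as best approximations is required.

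\textbf{Part (2).} Your claim $k\le r_2$ is false, and the reason is exactly where you say the bookkeeping is delicate. At a non-distinguished \emph{real} place the unit $v_j$ may have $\sigma_i(v_j)<0$, and then the sign of $\sigma_i(v_j)^{n_j}$ depends on the parity of $n_j=\lfloor c_j\log\mathcal E\rfloor$, which is \emph{not} a function of $\{c_j\log\mathcal E\}$. So the internal vector $w/\mathcal E$ does not depend on $\log\mathcal E$ only through the fractional parts you list. The paper's worked example is a totally real cubic field ($r_2=0$) in which $k=2$: both non-distinguished places contribute genuine angular variables because the fundamental units have negative images there. The paper handles this by taking commuting complex logarithms of the matrices $E_j$ (so a negative real eigenvalue $\lambda$ becomes $\log|\lambda|+i\pi$) and reading off $\bm\theta$ from the purely imaginary eigenvalues of $L-I$ in its Jordan form. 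A cheap fix for your approach is to replace each generator $v_j$ by $v_j^2$ so that all real embeddings are positive; this halves the rates $c_j$ and folds the sign information into the $[0,1]^r$ factor. As written, though, the proposal misses this and the stated bound on $k$ contradicts the paper's example.
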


\begin{remark} Note that $\E \mapsto \left({\bm \theta} \log {\mathcal E}, \{ {\bm \beta} \log {\mathcal E} \} \right)$
describes a linear flow in $\log \E$ time on the ``generalized annulus'' $\mathbb{T}^k \times [0,1]^r$, i.e.\ a quasiperiodic motion.  This
is why we say that the rescaled spacings distribution depends quasiperiodically on $\log \E$, asymptotically as $\E \rightarrow \infty$.
\end{remark}

\begin{remark}\label{RMK_LABELLING_AMBIGUITY}  Note also that the rescaling factor $u({\mathcal E})$ is adapted
to the number field $\Phi$ and therefore it might not precisely equal the
normalization factor $\lambda(\E)$.   I.e.\ the rescaled spacings may not be
normalized to have average length exactly equal to $1$.  This is why we refer
to $u({\mathcal E})$ as the ``rescaling factor'' rather than the ``normalizing
factor''.  
\end{remark}

For many quantum systems it is difficult to determine the normalization factor $\lambda(E)$.
In 2007 it was proposed by Oganesyan and Huse \cite{OH} that instead of studying the statistics of the normalized spacings $\overline{\delta}_i(E)$ for $2 \leq i \leq \ell(\E)$ one can study the ratios of neighboring spacings
\begin{align*}
\rho_i(\E) := \frac{\delta_i(\E)}{\delta_{i-1}(\E)} \qquad  \mbox{for $2 \leq i \leq \ell(E)-1$}.
\end{align*}
(Note that in \cite{OH} they take the reciprocal of $\rho_i(E)$ in the case that $\rho_i(E) > 1$, but we will not do that.)
The conjectures of Berry and Tabor and of Bohigas, Giannoni, and Schmit can then be re-phrased in terms of
ratios.  See, for example, \cite{ABGR}.

Considering the ratios of neighboring spacings is also quite suitable in our
setting because the ambiguity associated to the choice of rescaling factor
$u(\E)$ that is described in Remark \ref{RMK_LABELLING_AMBIGUITY} is
eliminated.  We have:

\begin{THMB}
Suppose that the ratios of frequencies satisfy that $1,\omega_1,\ldots
,\omega_d\in\RR$ form a $\QQ$-basis for an algebraic number field $\Phi$.
We then have:

\begin{itemize}
\item[(1)] {\bf Uniform Set of Ratios:}
There is a finite set
$\R := \{r_1,\ldots,r_J\}  \subset \Phi$
such that for any $\mathcal{E} > 1$ the ratios of neighboring spacings satisfy:
\begin{align*}
\rho_i(\E) := \frac{\delta_i(\E)}{\delta_{i-1}(\E)} \in \R \qquad \mbox{for $2 \leq i \leq \ell({ \mathcal E})-1$}.
\end{align*}

\item[(2)]
{\bf Quasiperiodicity:}
For each $1 \leq j \leq J$ let $p_j(\mathcal{E})$ denote the proportion of the ratios of {\em neighboring} spacings $\rho_i({\mathcal E})$ such that $\rho_i({\mathcal E}) = r_j \in \R$.  Then,
there are integers \hbox{$0 \leq k,r \leq d+1$,} a Lipschitz continuous function
\begin{align*}
h: \mathbb{T}^k \times [0,1]^r \rightarrow P := \left\{(p_1,\ldots,p_J) \, : \sum p_j = 1, p_j \geq 0\right\},
\end{align*}
angles ${\bm \theta} = (\theta_1,\ldots,\theta_k) \in \mathbb{T}^k$, rates ${\bm \beta} = (\beta_1,\ldots,\beta_r) \in \mathbb{R}^r$, and $0 < \alpha < 1$ such that
\begin{align*}
(p_1({\mathcal E}),\ldots,p_J({\mathcal E})) = h\left({\bm \theta} \log {\mathcal E}, \{ {\bm \beta} \log {\mathcal E} \} \right) + \mathcal{O}(\alpha^{\log {\mathcal E}}).
\end{align*}
\end{itemize}
\end{THMB}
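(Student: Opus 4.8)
The plan is to reduce Theorem B to Theorem A by observing that the ratios $\rho_i(\E)$ are determined entirely by the \emph{uniformly labeled} pairs of consecutive rescaled spacings. First I would note that if $\overline{\delta_i}(\E) = u(\E)\delta_i(\E)$ and $\overline{\delta}_{i-1}(\E) = u(\E)\delta_{i-1}(\E)$, then the rescaling factor cancels in the quotient, so
\begin{align*}
\rho_i(\E) = \frac{\delta_i(\E)}{\delta_{i-1}(\E)} = \frac{\overline{\delta_i}(\E)}{\overline{\delta}_{i-1}(\E)}.
\end{align*}
By Theorem A part (1), each $\overline{\delta_i}(\E)$ lies in the finite set $\S = \{s_1,\ldots,s_J\} \subset \Phi$, and since every $s_j$ is a positive element of the number field $\Phi$, the set of all ratios $s_a/s_b$ that can actually occur is a finite subset $\R \subset \Phi$. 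This immediately gives part (1) of Theorem B, with the caveat that one should take $\R$ to be the set of ratios $s_a/s_b$ for which there exists some $\E$ and some $i$ with $\overline{\delta}_{i-1}(\E) = s_b$ and $\overline{\delta_i}(\E) = s_a$ (simultaneously, as consecutive spacings); bounding this by all formal quotients is harmless for finiteness.

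For part (2), the key step is to upgrade Theorem A from \emph{single-spacing} statistics to \emph{two-consecutive-spacing} statistics, i.e.\ to control, for each ordered pair $(s_a, s_b) \in \S^2$, the proportion $q_{a,b}(\E)$ of indices $i$ with $\overline{\delta}_{i-1}(\E) = s_a$ and $\overline{\delta_i}(\E) = s_b$. The abstract of the paper already asserts quasiperiodicity ``for the distribution of finite words in the finite alphabet of rescaled spacings,'' so I would invoke (or prove, in parallel with Theorem A, via the same dynamical/cut-and-project mechanism) the word-length-two case: there are integers $0 \le k, r \le d+1$, a Lipschitz map $G : \mathbb{T}^k \times [0,1]^r \to \Delta^{J^2-1}$ into the simplex of probability vectors indexed by pairs, angles $\bm\theta$, rates $\bm\beta$, and $0 < \alpha < 1$ with
\begin{align*}
\left(q_{a,b}(\E)\right)_{a,b} = G\!\left(\bm\theta \log \E,\, \{\bm\beta \log \E\}\right) + \mathcal{O}(\alpha^{\log \E}).
\end{align*}
Then $p_j(\E)$, the proportion of ratios equal to $r_j$, is obtained by summing $q_{a,b}(\E)$ over the finitely many pairs $(a,b)$ with $s_b/s_a = r_j$; equivalently, $p_j(\E) = (L \cdot (q_{a,b}(\E))_{a,b})_j$ for a fixed $0/1$ matrix $L$ encoding which pairs produce which ratio. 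Composing, $h := L \circ G$ is Lipschitz (a linear map composed with a Lipschitz map), so
\begin{align*}
(p_1(\E),\ldots,p_J(\E)) = h\!\left(\bm\theta \log \E,\, \{\bm\beta \log \E\}\right) + \mathcal{O}(\alpha^{\log \E}),
\end{align*}
with the same $\bm\theta$, $\bm\beta$, $\alpha$, which is exactly part (2). One small bookkeeping point: $p_j(\E)$ is the proportion over $2 \le i \le \ell(\E)-1$ rather than $1 \le i \le \ell(\E)-1$, but since $\ell(\E) \to \infty$ the boundary contributes $\mathcal{O}(1/\ell(\E))$, which is dominated by the stated error (or can be absorbed after noting $\ell(\E)$ grows polynomially in $\E$, hence like a fixed power of $e^{\log\E}$, whereas $\alpha^{\log\E}$ must be interpreted in the paper's normalization — I would match whichever convention Theorem A uses).

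The main obstacle is establishing the two-spacing (word-length-two) refinement of Theorem A, rather than the reduction itself, which is purely formal. Concretely, the proof of Theorem A presumably identifies the ordered energy levels in $[\E, \E+1)$ with a point configuration produced by a cut-and-project / Kronecker-flow construction depending on $\log \E$, so that the single-spacing statistics are read off from the induced-transformation (Rauzy-gasket / interval-exchange-type) data on a fundamental domain. To get consecutive-pair statistics one must track not just the \emph{distribution of tile types} but the \emph{transition structure} between adjacent tiles — i.e.\ work with the first-return map to the union of tiles and record which type follows which. The point is that this transition structure is governed by the same torus-times-cube dynamics in $\log \E$, so the quasiperiodicity, the Lipschitz dependence, and the exponentially small error term all persist; but verifying uniform boundedness of the number of pair-types and the Lipschitz estimate for $G$ requires redoing the geometric measure-of-the-cut-region estimates one level up, for pairs of adjacent cells rather than single cells. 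I expect this to be the technical heart, with everything downstream (the matrix $L$, the composition, the boundary-index correction) being routine.
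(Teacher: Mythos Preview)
Your proposal is correct and follows essentially the same route as the paper: reduce Theorem B to a word-length-two refinement of Theorem A (the paper states this as Theorem C), then recover the ratio statistics by summing the pair-frequencies $q_{a,b}$ over pairs with $s_b/s_a = r_j$. Your speculation about the mechanism behind the word refinement (cut-and-project, first-return maps, Rauzy-type data) is more abstract than what the paper actually does---it simply refines the partition formula $M_j(t) = [M(t)\cap(M(t)-{\bm v}_j)]\setminus\bigcup_{i<j}(M(t)-{\bm v}_i)$ to $M_{j_0 j_1}(t) = M_{j_0}(t)\cap(M_{j_1}(t)-{\bm v}_{j_0}(t))$ and checks that the analogues of Propositions \ref{PROP:BLEHER_5_2}--\ref{PROP:BLEHER_5_3} go through---but your instinct that the technical heart is ``redoing the geometric measure-of-the-cut-region estimates one level up, for pairs of adjacent cells'' is exactly right.
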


\begin{remark}
Theorem B does not immediately follow from Theorem A since the ratios of neighboring spacings are considered
 rather than the ratios between arbitrary pairs of spacings.   
However, in Section~\ref{SEC:QP_ON_WORDS_AND_PROOF_THM_BPRIME} we will prove a generalization of Theorem A which implies Theorem B.
\end{remark}

In order to prove Theorems A and B we need a way to compute all of the energy
levels $\varepsilon$ occurring in
the window $\E \leq \varepsilon < \E + 1$
 for a give choice of $\E > 0$.
For any vector ${\bm m} = (m_1,\ldots, m_d)$ of non-negative integers
such that ${\bm m}\cdot {\bm \omega} < \mathcal{E} + 1$, there is
exactly one integer $m_0 \ge 0$ that forces 
\begin{align*}
 \E \leq \varepsilon := m_0 + m_1 \omega_1 + \ldots  + m_d < \E + 1.
\end{align*}
This allows us to reduce the problem modulo 1,
considering differences between the fractional parts of the numbers $m_1
\omega_1 + \ldots + m_d\omega_d$ determined by integer vectors ${\bm m} \in
R(t)$. Here, $R(t)$ is the homothetic expansion of \begin{align*} R = \{{\bm v}
\in \mathbb{R}^n \, : \,  v_1\omega_1 + \ldots + v_d\omega_d < 1 \quad \mbox{and} \quad v_i \geq 0 \quad \mbox{for $1 \leq i \leq d$}\}
\end{align*} by a factor of $t=\mathcal{E}+1$ about the origin.   
Therefore, Theorems A and B will be special cases of Theorems A' and B' that are stated in the next
subsection.

\subsection{Mathematical Perspective}\label{SEC:MATHEMATICAL_PROBLEM}
Having provided the physical context and statements of our results,
we will now rephrase them purely in the mathematical context.

Suppose that $d\in\NN$ and that $\bm{\omega}=(\omega_1,\ldots ,\omega_d)\in\RR^d$. Let $R$ be a bounded, convex region in $\RR^d$ with non-empty interior, and for $t\ge 1$ let $R(t)$ denote the homothetic dilation of $R$ by a factor of $t$.  Throughout the paper we will refer to $t \ge 1$ as the ``scale''.  Let $\{x\}$ denote the fractional part of a real number $x$, set $M(t)=R(t)\cap \ZZ^d$, and write the elements of the set
\begin{equation*}
Y(t):=\left\{\{\bm{m}\cdot\bm{\omega }\}:\bm{m}\in M(t)\right\}
\end{equation*}
in order as 
\begin{equation*}
0\le y_1(t)\le\cdots \le y_{|M(t)|}(t)<1.
\end{equation*}
For each value of $i=1,\ldots ,|M(t)|-1$, let
\begin{equation*}
\delta_i(t)=y_{i+1}(t)-y_{i}(t),
\end{equation*}
and let $D(t)$ be the number of distinct elements of the set $\{\delta_i(t)\}_{i=1}^{|M(t)|-1}$. Finally, let
\begin{equation*}
\Delta_1(t)<\cdots <\Delta_{D(t)}(t)
\end{equation*}
be the ordered sequence of these distinct elements. We may also write $\Delta_i$ for $\Delta_i(t)$ and, following~\cite{BlehHommJiRoedShen2012}, we refer to the quantities $\delta_i$ and $\Delta_i$ as \textit{spacings}.

The classical Three Gap Theorem (also called the Steinhaus Conjecture and the Three Distance Theorem) states that if $d=1$, then $D(t)\le 3$ for all $t$. This was first proved  in 1957 by S\'{o}s \cite{Sos1957}, in 1958 by Sur\'{a}nyi \cite{Sura1958}, and in 1959 by \'{S}wierczkowski \cite{Swie1959}. In the case when $d\ge 2$, estimating the size of $D(t)$ is a more difficult problem. It was known to Geelen and Simpson (attributed by them to Holzman in \cite[Section 4]{GeelSimp1993}) that, in the case when $d=2,$ if $R$ is a square with sides parallel to the coordinate axes and if $1,\omega_1,$ and $\omega_2$ are $\QQ$-linearly dependent, then
\begin{equation}\label{eqn.DBdd}
\sup_{t\ge 1}D(t)<\infty.
\end{equation}
A proof of this, as well as an extension to $d\ge 3$, is given in \cite[Section
4]{HaynMark2020}. A problem attributed to Erd\H{o}s is to determine for what
values of $\bm{\omega}$ the quantity $D(t)$ remains bounded. It was first
speculated that the condition that the numbers $1, \omega_1,\ldots , \omega_d$
be $\QQ$-linearly dependent is necessary in order for $D(t)$ to remain bounded.
However, the situation is more subtle.

A vector ${\bm \omega} \in \mathbb{R}^d$ is called \emph{Diophantine} with exponent $\gamma$ if there is some positive number $K$ such that for all nonzero vectors ${\bm m} \in \mathbb{Z}^d$, we have
\begin{equation}\label{EQ:BA}
|{\bm m} \cdot {\bm \omega}| \geq \frac{K}{|{\bm m}|^\gamma}, \textrm{ where } |{\bm m}| = \sqrt{m_1^2 + \ldots + m_d^2}.
\end{equation}
It follows from the Minkowski's Theorem that $\gamma \geq d$. We call ${\bm \omega}$ \emph{badly approximable} if $\gamma = d$.  The set of badly approximable
$\bm{\omega}$ has zero Lebesgue measure, but it is known by work of Jarn\'{i}k \cite{Jarn1928} and Wolfgang Schmidt \cite{Schm1969} to be a subset of $\RR^d$ of Hausdorff dimension $d$.

\begin{THM_BD} 
If ${\bm \omega} \in \mathbb{R}^d$ is badly approximable, then (\ref{eqn.DBdd}) holds.
\end{THM_BD}

\noindent
This result was not published by Boshernitzan and Dyson, but a proof can be found in \cite{BlehHommJiRoedShen2012}.

In the other direction, it was proved in \cite{HaynMark2020} that when $d\ge 2$, for almost all choices of $\bm{\omega}\in\RR^d$,
\begin{equation}\label{eqn.DUnbdd}
\sup_{t\ge 1}D(t)=\infty.
\end{equation}
The proof given in \cite{HaynMark2020} uses ergodic theory in spaces of unimodular lattices in $\RR^{d+1}$. Part of the interest in this problem lies in the fact that, for $d\ge 2$, if \eqref{eqn.DUnbdd} holds, then
\begin{equation*}
\liminf_{n\rar\infty}n\|n\omega_1\|\cdots\|n\omega_d\|=0,
\end{equation*}
where $\|\cdot\|$ denotes the distance to the nearest integer \cite[Theorem 3]{HaynMark2020}. In other words, if \eqref{eqn.DUnbdd} holds then the generalized Littlewood conjecture is true for $\bm{\omega}$. The converse of this statement, however, is not true.

We also remark that, as another twist in the above mentioned problem of Erd\H{o}s, it turns out that there do exist vectors ${\bm \omega} \in \mathbb{R}^d$ that are not badly approximable but for which (\ref{eqn.DBdd}) holds.  An explicit example is given in \cite{BK} for $d=2$.

Now let us focus on the situation when $1,\omega_1,\ldots
,\omega_d\in\RR$ form a $\QQ$-basis for an algebraic number field $\Phi$ of
degree $d+1$. In this case it is known by work of Perron \cite{Perr1921} that
$\bm{\omega}$ is badly approximable, so it follows from the Boshernitzan-Dyson Theorem
that \eqref{eqn.DBdd} holds.  (In fact, Dyson originally proved the theorem in this special case.)  Actually, more is true, as demonstrated by
the following result.

\begin{theorem}\cite[Theorem 1.6]{BlehHommJiRoedShen2012}\label{thm.BHJRS1.6}
If $1,\omega_1,\ldots ,\omega_d\in\RR$ form a $\QQ$-basis for an algebraic number field $\Phi$, then there exists a finite set 
\begin{equation*}
\S=\{s_1, \cdots,s_J\}\subseteq\Phi
\end{equation*}
such that every spacing $\Delta_i$ has the form $us_j$ for some unit $u$ in the ring of integers $\ZZ_\Phi$ and some $s_j\in \S$.
\end{theorem}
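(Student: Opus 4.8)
The plan is to analyze the structure of the set $Y(t)$ as $t$ ranges over an interval of the form $[t_0, ct_0)$ for a well-chosen constant $c > 1$, and then to exploit the multiplicative action of units of $\ZZ_\Phi$ to reduce all scales to such a fundamental interval. First I would set up the right geometric picture: the points $\{\bm m \cdot \bm \omega\}$ for $\bm m \in M(t) = R(t) \cap \ZZ^d$ are the images under the map $x \mapsto \{x\}$ of the values $L(\bm m) := m_1\omega_1 + \cdots + m_d\omega_d$. Since $1, \omega_1, \ldots, \omega_d$ is a $\QQ$-basis of $\Phi$, the quantity $n + L(\bm m)$ for $n \in \ZZ$ ranges over a full-rank lattice $\Lambda \subset \Phi$ (namely the $\ZZ$-span of $1, \omega_1, \ldots, \omega_d$), and embedding $\Phi \hookrightarrow \RR^{r_1} \times \CC^{r_2}$ via the archimedean places realizes $\Lambda$ as a genuine lattice in $\RR^{d+1}$. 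The key point is that a spacing $\Delta_i(t)$ is the difference of two fractional parts $\{L(\bm m)\} - \{L(\bm m')\}$ with no lattice point of $\Lambda$ strictly between them in the first coordinate; writing this difference as $L(\bm m) - L(\bm m') - (\text{integer})$, it is an element of $\Lambda$, hence an element of $\Phi$.

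Next I would show that, as $t$ grows, these "gap vectors" in $\Lambda$ cannot be arbitrary: the defining property (no lattice point of $\Lambda$ between $y_i$ and $y_{i+1}$ within $[0,1)$, with both endpoints arising from $\bm m$'s in the convex body $R(t)$) constrains the gap vector $\gamma = L(\bm m) - L(\bm m') - n \in \Lambda$ to have small archimedean value at the distinguished real place (it is a spacing, so $0 < \gamma < 1$ there and in fact $\gamma \to 0$), while the other conjugates of $\gamma$ are controlled by the fact that $\bm m, \bm m'$ lie in the dilated region $R(t)$, so their coordinates are $O(t)$, forcing the conjugate absolute values of $\gamma$ to be $O(t^c)$ for an explicit $c$ depending on $R$ and the embedding. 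Thus every spacing $\Delta_i(t)$ lies in the set
\begin{equation*}
B(t) := \left\{ \gamma \in \Lambda \setminus \{0\} \ : \ 0 < \sigma_0(\gamma) < 1, \ |\sigma(\gamma)| \le C t^{\kappa} \ \text{for all other places } \sigma \right\},
\end{equation*}
for suitable constants $C, \kappa$. The finiteness input from algebraic number theory is then: the group of units $\ZZ_\Phi^\times$ acts on $\Lambda$ (indeed $u \Lambda \subseteq \Lambda$ if $u$ is an algebraic integer, and one can arrange $\Lambda$ to be a fractional ideal so that a finite-index subgroup of units preserves it), and by Dirichlet's unit theorem this action on the relevant region of $\Phi \otimes \RR$ has a compact fundamental domain; the lattice points of $\Lambda$ in that compact fundamental region are finite in number. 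That finite set (of representatives for the orbit data that can occur as spacings) is the set $\S$.

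The remaining step is to close the loop: given any $t \ge 1$, choose a unit $u \in \ZZ_\Phi^\times$ — using the unit theorem — so that multiplication by $u$ carries the "box" $B(t)$ into $B(t_0)$ for a fixed $t_0$ lying in a bounded fundamental interval, i.e., $u$ absorbs the growth in the non-distinguished places; then every spacing $\Delta_i(t)$ equals $u^{-1} \cdot (\text{some element } \gamma' \text{ with } \gamma' \in B(t_0) \cap \Lambda)$, and $B(t_0) \cap \Lambda$ is a fixed finite set — this is $\S$ (after normalizing: since $u \in \ZZ_\Phi^\times$ we can write $\Delta_i = u' s_j$ with $u' = u^{-1}$ a unit and $s_j \in \S$, as claimed). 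The main obstacle I anticipate is the careful bookkeeping in the second step: one must verify that the constraint "no lattice point of $\Lambda$ lies strictly between $y_i(t)$ and $y_{i+1}(t)$" — which is what makes $\Delta_i$ a spacing rather than an arbitrary small element of $\Lambda$ — together with the convex-body constraint on $\bm m, \bm m'$, really does pin the conjugate sizes down by a uniform polynomial bound in $t$, independent of how $R$ and $t$ interact (this uses that $R$ is convex and bounded, so that the integer points of $R(t)$ have coordinates $O(t)$, but one must be careful near the boundary of $R(t)$ and with the case where $\bm m$ and $\bm m'$ are far apart in $\ZZ^d$ even though their images under $L$ are close mod $1$). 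Once the box $B(t)$ is correctly identified and shown to be $\ZZ_\Phi^\times$-equivariant up to scale, the finiteness of $\S$ is essentially Dirichlet's unit theorem plus the discreteness of $\Lambda$.
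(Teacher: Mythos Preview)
Your overall architecture is the same as the paper's: realize each spacing as a nonzero element of a lattice in $\Phi$, bound its images under the archimedean embeddings in terms of $t$, use Dirichlet's unit theorem to renormalize by a unit, and invoke discreteness to get a finite set $\S$. That part is fine.

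The genuine gap is in your definition of the box $B(t)$. You only impose $0<\sigma_0(\gamma)<1$ at the distinguished real place (with a vague remark that ``$\gamma\to 0$''), while at the other places you impose $|\sigma(\gamma)|\le Ct^{\kappa}$. With these bounds, $B(t)$ has volume growing like $t^{d\kappa}$. Since multiplication by a unit has determinant $\pm 1$ on $\Phi\otimes\RR$, no unit can carry $B(t)$ into the fixed box $B(t_0)$: if you choose $u$ with $|\sigma(u)|\asymp t^{-\kappa}$ at the non-distinguished places to ``absorb the growth'', then the product formula forces $|\sigma_0(u)|\asymp t^{d\kappa}$, and $|\sigma_0(u\gamma)|$ can be as large as $t^{d\kappa}$, not bounded by $1$. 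So the renormalization step, as written, fails.

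What closes the gap is a quantitative upper bound at the distinguished place: one needs $\Delta_i(t)\le K'/t^{d}$ uniformly, not merely $\Delta_i(t)<1$. This is exactly where the algebraic hypothesis enters in a sharp way. Because $1,\omega_1,\ldots,\omega_d$ form a $\QQ$-basis for a number field, Perron's theorem gives that $\bm\omega$ is badly approximable, and then the inhomogeneous transference principle (Cassels) yields $\Delta_i(t)\le K'/t^{d}$. With this bound the box becomes $|\sigma_0(\gamma)|\le C/t^{d}$, $|\sigma_j(\gamma)|\le Ct$ for $j\ge 2$, which has \emph{bounded} volume; now the unit with $|\sigma_0(u)|\asymp t^{d}$ and $|\sigma_j(u)|\asymp t^{-1}$ genuinely carries $u\Delta_i$ into a fixed cube, and discreteness finishes the proof. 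The ``no lattice point in between'' condition that you flag as the anticipated obstacle is not what controls the conjugate sizes; it is what forces the spacing itself to be small, and making that smallness quantitative is the missing step.
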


\noindent
Throughout this paper we will denote by $\ZZ_\Phi^\times$ the group of units in the ring of integers $\ZZ_\Phi$ of an algebraic number field $\Phi$.

Consider the one dimensional case $d=1$.  As $t \ge 1$ increases, additional points
$y_j(t)$ are added one-by-one.  Most of the time, this results in one of the
large-sized spacings being split into one of the mid-sized spacings and one of
the small-sized spacings.  This continues until each of the large-sized
spacings is split, at which point one renames the medium-sized spacings to be ``large'' and the small-sized spacings to be ``medium''
and then repeats the process, splitting each new large-sized spacing into a new medium-sized spacing and a (truly new) small-sized spacing.
Therefore, as $t$ increases, the
proportions of spacings occurring that are deemed to be ``small'', ``medium'', and ``large'' depend
in an organized way on $t$, which can be made precise using the theory of continued fractions.

The purpose of this paper is to describe analogous behavior in the far more complicated situation 
when $d \geq 2$, at least in the algebraic case when $1,\omega_1,\ldots ,\omega_d\in\RR$ form a $\QQ$-basis for an algebraic number field $\Phi$.
We will prove in Part (1)~of Theorem~A', below,  a stronger version of Theorem
\ref{thm.BHJRS1.6} that allows us to \textit{uniformly} label the spacings $\Delta_1(t),\ldots,\Delta_{D(t)}(t)$
using the elements of a finite set $\S \subset \Phi$.  We can then prove in Part (2) of Theorem~A' a description
of the time $t$
evolution of the proportion of spacings realizing these labels from $\S$.  
We will also prove a similar theorem about the frequencies with which the ratios of neighboring spacings
occur, see Theorem~B' below.

\begin{THMAPRIME}
Suppose that $1,\omega_1,\ldots
,\omega_d\in\RR$ form a $\QQ$-basis for an algebraic number field $\Phi$
and let $r \geq 1$ denote the rank of $\ZZ_\Phi^\times$.
We then have:

\begin{itemize}
\item[(1)] {\bf Uniform Labeling:}
There is a finite set
$\S := \{s_1,\ldots,s_J\}  \subset \Phi$
and a rescaling factor $u(t) \in \ZZ_\Phi^\times$
 such that for any $t > 1$ we have
\begin{align*}
\overline{\delta_i}(t) := u(t)\delta_i(t) \in \S \qquad \mbox{for $1 \leq i \leq |M(t)|-1$}.
\end{align*}
We will refer to the $\left\{\overline{\delta_i}(t)\right\}$ as the {\em rescaled spacings}.

\item[(2)]
{\bf Quasiperiodicity:}
For each $1 \leq j \leq J$ let $p_j(t)$ denote the proportion of the rescaled spacings $\overline{\delta_i}(t)$ such that $\overline{\delta_i}(t) = s_j \in \S$.  Then,
there is an integer $0 \leq k \leq d+1$, a Lipschitz continuous function
\begin{align*}
g: \mathbb{T}^k \times [0,1]^r \rightarrow P := \left\{(p_1,\ldots,p_J) \, : \sum p_j = 1, p_j \geq 0\right\},
\end{align*}
angles ${\bm \theta} = (\theta_1,\ldots,\theta_k) \in \mathbb{T}^k$, rates ${\bm \beta} = (\beta_1,\ldots,\beta_r) \in \mathbb{R}^r$, and $0 < \alpha < 1$ such that
\begin{align*}
(p_1(t),\ldots,p_J(t)) = g\left({\bm \theta} \log t, \{ {\bm \beta} \log t \} \right) + \mathcal{O}(\alpha^{\log {
t}}).
\end{align*}
Here, we use the notation
\begin{align*}
\{ {\bm \beta} \log t \} := \left(\{ \beta_1 \log t \}, \ldots, \{ \beta_r \log t \} \right) \in [0,1]^r,
\end{align*}
where $\{x\}$ denotes the fractional part of a real number $x$.
\end{itemize}
\end{THMAPRIME}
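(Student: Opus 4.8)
\emph{Plan of proof.} I would pass to the Minkowski geometry of $\Phi$. Let $\sigma_0=\mathrm{id},\sigma_1,\dots$ be the archimedean embeddings and $\iota\colon\Phi\hookrightarrow K_{\RR}\cong\RR^{d+1}$ the associated embedding, under which the full $\ZZ$-module $\Lambda_0:=\ZZ\cdot 1+\ZZ\omega_1+\dots+\ZZ\omega_d$ becomes a lattice $\Lambda$ and multiplication by a unit acts as a volume-preserving linear map, block-diagonal in the Minkowski coordinates (a scaling at each real place, a rotation--scaling at each complex place); working with the finite-index subgroup $\mc O^{\times}\subseteq\ZZ_{\Phi}^{\times}$, $\mc O$ the ring of multipliers of $\Lambda_0$, these maps also preserve $\Lambda$. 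Writing $\{\bm m\cdot\bm\omega\}=\bm m\cdot\bm\omega-n\in\Lambda_0$, one checks that ``$\bm m\in M(t)$'' is equivalent to a fixed system of $d+1$ homogeneous linear inequalities on $\iota(\{\bm m\cdot\bm\omega\})$ (cutting out a simplicial cone $\mc C$) together with $\sigma_0(\cdot)\in[0,1)$ and one inhomogeneous inequality of size $\asymp t$. Thus $M(t)$ corresponds to $\Lambda\cap\Omega(t)$, where $\Omega(t)$ is a ``pancake'' of $\sigma_0$-thickness $\asymp1$ and extent $\asymp t$ in the remaining $d$ directions, with $\asymp t^{d}$ lattice points, and the $\delta_i(t)\in\Lambda_0$ are the consecutive differences of this set in order of $\sigma_0$.

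\emph{Part (1).} Since $\bm\omega$ is badly approximable (Perron \cite{Perr1921}), the Boshernitzan--Dyson Theorem and well-distribution of the $\{\bm m\cdot\bm\omega\}$ give $\sigma_0(\delta_i(t))\asymp t^{-d}$ uniformly in $i,t$, and a short computation shows the norms $N(\delta_i(t))$ are rationals of bounded height, hence finitely many values. The crux is the uniform refinement $|\sigma_v(\delta_i(t))|\asymp a_v(t)$, with $a_v(t)$ depending only on the place $v$ and on $t$ and constants independent of $i,t$; I would prove it by combining boundedness of $D(t)$ (so each spacing is a bounded nonnegative-integer combination of a bounded set of ``atomic'' difference vectors) with the simplicial-cone geometry of $\Omega(t)$. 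Equivalently, in the logarithmic embedding the cloud $\{\mathrm{Log}(\delta_i(t))\}$ has bounded diameter and, since by a uniform form of \cite[Theorem~1.6]{BlehHommJiRoedShen2012} each $\delta_i(t)$ is a unit times an element of a fixed finite set, its translation class drifts along a fixed line $\RR\bm\mu$ inside the span of the unit log-lattice. Hence one may choose a single $u(t)\in\mc O^{\times}$ whose logarithmic embedding cancels $(\log a_v(t))_v$ up to a bounded error, uniformly in $t$; then $\iota(u(t)\delta_i(t))$ lies in a fixed bounded region for all $i$, and as $u(t)\Lambda_0=\Lambda_0$ the rescaled spacings lie in the fixed finite set $\S:=\Lambda_0\cap(\text{that region})\subset\Phi$.

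\emph{Part (2).} Applying $u(t)$ turns the pancake into a ``needle'': an interval of length $\asymp t^{d}$ along $\sigma_0$ times a bounded cross-section $W(t)\subset\RR^{d}$ of volume $\asymp1$, and $\Lambda\cap u(t)\Omega(t)$ is a one-dimensional cut-and-project set along it. For $s_j\in\S$, the event that the spacing after a point $\xi$ equals $u(t)^{-1}s_j$ is a finite conjunction of conditions on $\iota(\xi)$ (that $\iota(\xi)+\iota(u(t)^{-1}s_j)\in\Omega(t)$ and that no $\Lambda$-translate with small positive $\sigma_0$ lies in $\Omega(t)$ in between), so $p_j(t)=\#(\Lambda\cap\Omega_j(t))/(|M(t)|-1)$ for an explicit sub-pancake $\Omega_j(t)\subseteq\Omega(t)$. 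Equidistribution of the internal coordinates along the needle --- with a polynomial-in-$t$ discrepancy bound from badly approximability --- together with the fact that the ``ambiguous'' boundary layer has internal thickness $\asymp t^{-d}$, yields $p_j(t)=\mathrm{vol}(W_j(t))/\mathrm{vol}(W(t))+O(\alpha^{\log t})$ for a suitable $\alpha<1$, $W_j(t)\subseteq W(t)$ the rescaled cross-section of $\Omega_j(t)$.

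\emph{Quasiperiodicity and the main obstacle.} The polytopes $W_j(t)\subseteq W(t)$ are governed by $W(t)=(t\,D'(t))\cdot\Delta_0$ for a fixed simplex $\Delta_0$ and the unimodular diagonal matrix $t\,D'(t)=\mathrm{diag}(t\,\sigma_v(u(t)))_{v\ge1}$; its log-moduli $\log|t\,\sigma_v(u(t))|$ are, up to bounded renormalization, the coordinates of the reduction of $-(\log t)\bm\mu$ modulo the unit log-lattice --- a point of $H_0/\mathrm{Log}(\ZZ_{\Phi}^{\times})\cong\mathbb{T}^{r}$ moving linearly in $\log t$, recorded by $\{\bm\beta\log t\}\in[0,1]^{r}$ with $r$ the unit rank --- while the complex-place arguments $\arg\sigma_v(u(t))$ move continuously and linearly in $\log t$, giving a torus motion $\bm\theta\log t\in\mathbb{T}^{k}$, $0\le k\le d+1$. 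Since $\mathrm{vol}(W_j)/\mathrm{vol}(W)$ is Lipschitz in the vertices of these varying polytopes and in the angles, this produces a Lipschitz $g\colon\mathbb{T}^{k}\times[0,1]^{r}\to P$ with $(p_1(t),\dots,p_J(t))=g(\bm\theta\log t,\{\bm\beta\log t\})+O(\alpha^{\log t})$. I expect the main obstacle to be precisely the uniform refinement in Part (1) --- upgrading \cite[Theorem~1.6]{BlehHommJiRoedShen2012} to a single rescaling unit $u(t)$ valid for all $i$ at once --- which demands using boundedness of $D(t)$ and the geometry of $\mc C$ in tandem; granted that, Part (2) is comparatively routine equidistribution and bookkeeping.
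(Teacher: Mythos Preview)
Your Part~(1) outline has the right geometric picture but misidentifies the difficulty. You do not need the two-sided refinement $|\sigma_v(\delta_i(t))|\asymp a_v(t)$; one-sided upper bounds already suffice. From bad approximability and transference one gets $|\sigma_1(\Delta_i)|\le C/t^d$, and trivially $|\sigma_v(\Delta_i)|\le Ct$ for $v\ge 2$ since $\bm m-\bm m'$ has entries $O(t)$. Because $\varphi(\ZZ_\Phi^\times)$ is a lattice in the trace-zero hyperplane, there is a unit $u(t)$ with $\varphi(u(t))$ within bounded distance of $(d\log t,-\log t,\dots,-\log t)$; then each $|\sigma_v(u(t)\Delta_i)|$ is bounded by a product of two upper bounds, so $\sigma(u(t)\Delta_i)$ lies in a fixed cube, and $\S$ is the (finite) set of lattice points there. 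No appeal to bounded $D(t)$, ``atomic'' difference vectors, or \cite[Theorem~1.6]{BlehHommJiRoedShen2012} is needed, and what you call the ``main obstacle'' evaporates.

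Your Part~(2) has a genuine gap in the identification of the torus $\mathbb{T}^k$. You attribute the torus motion solely to ``the complex-place arguments $\arg\sigma_v(u(t))$'', which would force $k\le r_2$. But the paper's worked example is a \emph{totally real} cubic ($r_2=0$) with $k=2$. The missing source of angles is the \emph{signs} of the units at real places: if $\sigma_v(\epsilon_j)<0$, the matrix logarithm of multiplication by $\epsilon_j$ picks up an $i\pi$ in that coordinate, and the continuous interpolation $\tilde U(t)=\exp((\log t)L)$, $L=\sum_j w_jL_j$, acquires a genuine oscillation there. The paper handles this by constructing commuting matrix logarithms of the multiplication-by-$\epsilon_j$ matrices and analyzing the Jordan form of $L-{\rm I}$: the $\theta$'s are its purely imaginary eigenvalues, and a separate lemma is needed to show those eigenvalues have trivial Jordan blocks (else $(\log t)$-polynomial factors would spoil boundedness). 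In your needle picture the signs of $\sigma_v(u(t))$ flip the orientation of the cross-section simplex $W(t)$ and genuinely affect the sub-regions $W_j(t)$; since the parity of $\lfloor\beta_j\log t\rfloor$ is not recoverable from $\{\beta_j\log t\}$, you cannot parametrize the picture by $\{\bm\beta\log t\}$ and complex arguments alone. Your framework can likely be repaired by tracking $\sigma_v(u(t))$ rather than $|\sigma_v(u(t))|$ and introducing an explicit continuous interpolation, but as written the quasiperiodicity conclusion is not established.
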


Note that $t \mapsto \left({\bm \theta} \log t, \{ {\bm \beta} \log
t \} \right)$ describes a linear flow in $\log t$ time on the
``generalized annulus'' $\mathbb{T}^k \times [0,1]^r$, i.e.\ a quasiperiodic
motion.  Therefore, 
Theorem~A' asserts that the frequencies at which the rescaled spacings $s_1
< s_2 < \cdots < s_J$ occur, as a function of $t$, depend quasiperiodically on
$\log t$, as $t\rar\infty$. 
To understand some of the details of this
dependence, before delving into the complexities of the proof, first note there
are many ways to choose a basis $\{\epsilon_1,\ldots ,\epsilon_r\}$ for a
finite index subgroup of $\ZZ_\Phi^\times$, none of which in general can be
assumed to be canonical. All of the parameters in the statement of the theorem
depend on this choice of basis and, once it has been made, they are explicitly
computable. The units $u(t)$ will turn out to be determined by
\[u(t)^{-1}=\epsilon_1^{\lfloor \beta_1 \log t\rfloor} \cdots \epsilon_r^{\lfloor
\beta_r \log t\rfloor},\] for a suitable choice of ${\bm \beta}$, and this in
turn determines $\S$. The Minkowski embedding of $\Phi$ into $\RR^{d+1}$ (see
next section for details) allows us to view multiplication by $u(t)^{-1}$ in $\Phi$
as a linear transformation $U(t)$ on $\RR^{d+1}$. By using the Jordan
decomposition of a complex matrix defined using a particular choice of matrix
logarithm for $U(t)$, we are able to understand the time evolution of $U(t)$ as
$t\rar\infty$, and to prove that it is governed completely by two types of
generalized eigenspaces: one dimensional spaces corresponding to purely
imaginary eigenvalues $2\pi i\theta_1,\ldots , 2\pi i \theta_k$ (of which there
is at least one), and spaces with eigenvalues whose real parts are negative.
This gives $k$ and ${\bm \theta}$, and $\alpha$ is determined by the maximum of
the real parts of the eigenvalues which are not purely imaginary (if there are
any). Finally, the function $g$ is defined explicitly in Section
\ref{subsec.glue} and equation \eqref{eqn.g3def}, it is straightforward to
compute and depends only on $\S$ and the region $R$.  

\begin{remark} Under the
additional hypothesis that $\Phi$ has one fundamental unit (which restricts the theorem to quadratic fields and cubic fields with a complex embedding), a preliminary version of Theorem~A' was proved in
\cite[Thms 1.5 and 1.6]{BlehHommJiRoedShen2012}.  The term involving rate
$\beta \equiv \beta_1$ does not appear in those theorems because they are
expressed at a sequence of times $t_n = \eta {\rm e}^{n/\beta}$ that is chosen
to make the sequence $\{\beta \log t_n\}$ constant.  When $r > 1$ such a choice
is not typically possible. However, several aspects of the proofs from
\cite[Thms 1.5 and 1.6]{BlehHommJiRoedShen2012} will play an important role in
our proof of Theorem~A'.  \end{remark}

The labeling of spacings by elements of $\S$ may seem ad
hoc because it depends on the construction of $u(t)$ from the proof of Theorem
A', Part (1).  By adjusting the choice of $u(t)$ we could easily change the set of labels $\S$. 
For this reason, it may be more natural to consider the ratios between the neighboring spacings.

\begin{THMBPRIME}
Suppose that $1,\omega_1,\ldots
,\omega_d\in\RR$ form a $\QQ$-basis for an algebraic number field $\Phi$
and let $r \geq 1$ denote the rank of $\ZZ_\Phi^\times$.
We then have:

\begin{itemize}
\item[(1)] {\bf Uniform Set of Ratios:}
There is a finite set
$\R := \{r_1,\ldots,r_J\}  \subset \Phi$
such that for any $t > 1$ the ratios of neighboring spacings satisfy:
\begin{align*}\label{EQN:FINITE_RATIO_SET}
\rho_i(t) := \frac{\delta_i(t)}{\delta_{i-1}(t)} \in \R \qquad \mbox{for $2 \leq i \leq |M(t)|-1$}.
\end{align*}

\item[(2)]
{\bf Quasiperiodicity:}
For each $1 \leq j \leq J$ let $p_j(t)$ denote the proportion of the ratios of neighboring spacings $\rho_i(t)$ such that $\rho_i(t) = r_j \in \R$.  Then,
there is an integer $0 \leq k \leq d+1$, a Lipschitz continuous function
\begin{align*}
h: \mathbb{T}^k \times [0,1]^r \rightarrow P := \left\{(p_1,\ldots,p_J) \, : \sum p_j = 1, p_j \geq 0\right\},
\end{align*}
angles ${\bm \theta} = (\theta_1,\ldots,\theta_k) \in \mathbb{T}^k$, rates ${\bm \beta} = (\beta_1,\ldots,\beta_r) \in \mathbb{R}^r$, and $0 < \alpha < 1$ such that
\begin{align*}
(p_1(t),\ldots,p_J(t)) = h\left({\bm \theta} \log t, \{ {\bm \beta} \log t \} \right) + 
\mathcal{O}(\alpha^{\log t}).
\end{align*}
\end{itemize}
\end{THMBPRIME}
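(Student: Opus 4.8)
The plan is to deduce Theorem~B' from Theorem~A' together with a word-level refinement of its Part~(2). \emph{Part~(1)} will be immediate: with $\S=\{s_1,\ldots,s_J\}\subset\Phi$ and the rescaling unit $u(t)\in\ZZ_\Phi^\times$ supplied by Part~(1) of Theorem~A', we have $\delta_i(t)=u(t)^{-1}\overline{\delta_i}(t)$ with $\overline{\delta_i}(t)\in\S$ for every $t>1$, so the common factor cancels and
\[
\rho_i(t)=\frac{\delta_i(t)}{\delta_{i-1}(t)}=\frac{\overline{\delta_i}(t)}{\overline{\delta_{i-1}}(t)}\in\left\{\,s/s':s,s'\in\S\,\right\}\qquad(2\le i\le|M(t)|-1);
\]
taking $\R$ to be the finite set of quotients $s/s'$ actually attained for some $t>1$ proves Part~(1).

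For Part~(2) I would encode the ordered rescaled spacings at scale $t$ as a word $W(t)=\overline{\delta_1}(t)\cdots\overline{\delta_{|M(t)|-1}}(t)$ over the finite alphabet $\S$, and for $\bm a=(a_1,\ldots,a_n)\in\S^n$ let $q_{\bm a}(t)$ be the proportion of positions at which $\bm a$ occurs as a length-$n$ factor of $W(t)$. The identity $\rho_i(t)=r_j$ holds precisely when the length-$2$ factor $(\overline{\delta_{i-1}}(t),\overline{\delta_i}(t))$ lies in the finite set $A_j:=\{(s',s)\in\S^2:s/s'=r_j\}$, and reindexing $i\mapsto i-1$ identifies the ratios $\rho_2,\ldots,\rho_{|M(t)|-1}$ one-to-one with the length-$2$ factors of $W(t)$, so $p_j(t)=\sum_{(s',s)\in A_j}q_{(s',s)}(t)$ exactly. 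Hence Part~(2) — with the \emph{same} $k$, $\bm\theta$, $\bm\beta$, $\alpha$ as in Theorem~A' — follows once one proves the word-level strengthening of Part~(2) of Theorem~A': for every $\bm a\in\S^n$ there is a Lipschitz function $G_{\bm a}:\mathbb{T}^k\times[0,1]^r\to[0,1]$ with $q_{\bm a}(t)=G_{\bm a}(\bm\theta\log t,\{\bm\beta\log t\})+\mathcal{O}(\alpha^{\log t})$. One then sets $h_j:=\sum_{(s',s)\in A_j}G_{(s',s)}$. Taken over all $n$, this word-level statement is the generalization promised in Section~\ref{SEC:QP_ON_WORDS_AND_PROOF_THM_BPRIME}, and it also yields the assertion in the abstract about finite words in the rescaled-spacing alphabet.

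To prove the word-level statement I would re-run the renormalization scheme underlying the proof of Theorem~A', now tracking blocks of $n$ consecutive gaps rather than single gaps. That proof supplies the unit $u(t)$ with $u(t)^{-1}=\epsilon_1^{\lfloor\beta_1\log t\rfloor}\cdots\epsilon_r^{\lfloor\beta_r\log t\rfloor}$, realizes multiplication by $u(t)^{-1}$ as a linear map $U(t)$ on the Minkowski space $\RR^{d+1}$, extracts $k$, $\bm\theta$ and $\alpha$ from the Jordan decomposition of $U(t)$, and shows that the renormalized point configurations — obtained by rescaling $Y(t)$ via $u(t)$ at the real place and tracking the remaining coordinates — converge at the geometric rate $\alpha^{\log t}$ to a limiting configuration depending only on $(\bm\theta\log t,\{\bm\beta\log t\})$. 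Because this rescaling acts by multiplication by a positive real number at the real place that defines the $y_i$, it preserves the order of the points, so the word $W(t)$ read off from consecutive gaps is unchanged by renormalization and $q_{\bm a}(t)$ is a bounded-complexity functional of the renormalized ordered configuration. The same gluing construction that defines $g$ in Section~\ref{subsec.glue} and \eqref{eqn.g3def}, applied to local patterns of $n$ consecutive gaps instead of single gaps, then produces the Lipschitz function $G_{\bm a}$.

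I expect the main obstacle to be the Lipschitz regularity of $G_{\bm a}$. As the renormalization parameters vary, blocks of $n$ consecutive gaps cross the thresholds governing which element of $\S^n$ they realize, so $q_{\bm a}$ is a priori only piecewise defined; Lipschitz control should come, exactly as for $g$ and $h$ in Theorem~A', from the fact that at any parameter only a Lipschitz-small proportion of blocks lies within a given distance of such a threshold, together with the observation that blocks straddling the boundary of $R(t)$ or the seam where the circle is cut occupy only an $\mathcal{O}(t^{-1})$ proportion of positions and are absorbed into the error term. Carrying this estimate out uniformly and simultaneously in the toral variable $\bm\theta\log t$ and the fractional-part variable $\{\bm\beta\log t\}$, and checking that for almost every parameter the limiting configuration has no accidental gap coincidences that would destroy continuity, is where the bulk of the work lies; everything else is a matter of assembling pieces already present in the proof of Theorem~A'.
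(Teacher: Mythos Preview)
Your strategy is exactly the paper's: Part~(1) is immediate from Theorem~A' by cancelling $u(t)$, and Part~(2) is deduced from a word-level strengthening (Theorem~C) applied to words of length two, with $p_j(t)=\sum_{(s',s)\in A_j}q_{(s',s)}(t)$.

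Where you diverge is in the execution of Theorem~C. You frame it in terms of ``renormalized point configurations converging'' and identify Lipschitz regularity near ``thresholds'' and ``accidental gap coincidences'' as the main obstacle. The paper sidesteps all of this with a single combinatorial observation: if ${\bm m}(y_i(t))={\bm v}$ realizes the rescaled spacing $s_{j_0}$, then ${\bm m}(y_{i+1}(t))={\bm v}+{\bm v}_{j_0}(t)$, so
\[
M_{j_0j_1}(t)=M_{j_0}(t)\cap\bigl(M_{j_1}(t)-{\bm v}_{j_0}(t)\bigr),
\]
and inductively $M_{j_0\cdots j_l}(t)$ is an intersection of translates of the $M_j(t)$ by vectors built from the ${\bm v}_j(t)$. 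One then defines $\mathscr{P}_{j_0\cdots j_l}({\bm v})$ by the same formula with $R$ and $P_j({\bm v})$ in place of $M(t)$ and $M_j(t)$, and the generalizations of Propositions~\ref{PROP:BLEHER_5_2} and~\ref{PROP:BLEHER_5_3} hold verbatim. This reduces the word case to \emph{exactly} the same finite-dimensional data ${\bm v}(t)=({\bm m}(s_1u_1(t))/t,\ldots,{\bm m}(s_Ju_1(t))/t)$ already analyzed for Theorem~A', so nothing new is needed in Sections~\ref{subsec.glue}--\ref{subsec.dynamics}; in particular Lipschitz continuity is just Proposition~\ref{PROP:BLEHER_5_3} applied to finite intersections of translates of convex sets, and no genericity or ``no coincidence'' argument is required.
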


\begin{remark}\label{REMARK_ABOUT_THMC}
Theorem B' does not immediately follow from Theorem A' because only the ratios
of neighboring spacings are considered, rather than the ratios between
arbitrary pairs of spacings.   However, in Section \ref{SEC:QP_ON_WORDS_AND_PROOF_THM_BPRIME} we will state and
prove a stronger version of Theorem A', namely Theorem~C, about the frequencies with which a given
word $s_{j_1} s_{j_2} \ldots s_{j_l}$ appears among the consecutive
rescaled spacings.   Theorem B' then follows immediately from Theorem C using words of length $l = 2$.
\end{remark}

\begin{question}
Let ${\bm \omega} \in \mathbb{R}^d$ and suppose Statement {\rm (1)} from Theorem B' holds
for the ratios of spacings determined by  ${\bm \omega}$.
Must $1, \omega_1, \ldots, \omega_d$ be a $\mathbb{Q}$-basis for an algebraic number field~$\Phi$?
\end{question}

\subsection{Plan for the paper}

As explained at the end of Section \ref{SUBSEC:PHYSICAL}, Theorems A and B are
direct consequences of Theorems A' and B'. 

Our proof of Part (1) of Theorem~A' is an application of transference
principles from Diophantine approximation, together with well known results
from algebraic number theory. In Section \ref{sec.Prelim} we will review some
of these results, and in Section \ref{sec.PfSteinFinite} we will present the
proof of Part (1) of Theorem~A'.
Our proof of Part (2) of Theorem~A' uses many results from the proofs of 
\cite[Thms 1.5 and 1.6]{BlehHommJiRoedShen2012} combined with several new ideas that are needed
when $\mathbb{Z}_\Phi^\times$ has rank $r > 1$.  It is presented in Section~\ref{sec.QP}.
In Section \ref{SEC:QP_ON_WORDS_AND_PROOF_THM_BPRIME} we prove Theorem C, the generalization of
Theorem A' mentioned in Remark \ref{REMARK_ABOUT_THMC}.  Since Theorem B' is an immediate corollary to Theorem C
this will also complete the proof of Theorem B'.
Finally, in Section \ref{sec.example} we work out the details of Theorems A' for a particular example (a totally real cubic field) which highlights the computational aspects and importance of many of the steps in our proofs.

\subsection{Acknowledgments}
The second author thanks Pavel Bleher for introducing him to this subject and
for many interesting conversations about it.  We thank Evgeny Mukhin and Vitaly
Tarasov for providing us with the proof of Lemma \ref{LEM:COMMUTING_LOGS},
which plays a crucial role in our paper.  The second author also thanks Aneesh Dasgupta for interesting conversations about this subject. The work of the first author was
supported by NSF grant DMS-2001248. The work of the second author was
supported by NSF grant DMS-1348589.


\section{Notation and preliminary results}\label{sec.Prelim}
For $x\in\RR$, we write $\{x\}$ for the fractional part of $x$ and $\|x\|$ for the distance from $x$ to the nearest integer. For $d\in\NN$ and $\bm{x}\in\RR^d$, we write $|\bm{x}|$ for the Euclidean norm of $\bm{x}$.

Results from Diophantine approximation known as transference principles (see \cite[Section V, Theorem VI]{Cass1957})  imply that $\bm{\omega}\in\RR^d$ is badly approximable if and only if there exists a constant $K'>0$ with the property that, for any $t\ge 1$, any ball of diameter $K'/t^d$ in $[0,1)$ contains a point of the set
\begin{equation*}
\left\{\{\bm{m}\cdot\bm{\omega }\}:\bm{m}\in M(t)\right\}.
\end{equation*}
If $1,\omega_1,\ldots ,\omega_d$ form a $\QQ$-basis for an algebraic number field of degree $d+1$ over $\QQ$,  then the results from \cite{Perr1921} mentioned in the introduction imply that $\bm{\omega}$ is badly approximable. By the transference principle just cited, we thus have for all $t\ge 1$ and for all $i$ that
\begin{equation}\label{eqn.TransBd}
\Delta_i\le\frac{K'}{t^d}.
\end{equation}

Next we summarize some basic facts from algebraic number theory, proofs of which can be found in \cite[Chapters 1, 3]{Swin2001}. As above, suppose that $\Phi$ is an algebraic number field of degree $d+1$ over $\QQ$. There are $d+1$ distinct embeddings of $\Phi$ into $\CC$, and the non-real complex embeddings come in complex conjugate pairs. Suppose there are $r_1$ real embeddings and $2r_2$ complex embeddings, and write $\sigma_1,\ldots ,\sigma_{r_1}$ for the real embeddings and $\sigma_i,\sigma_{i+r_2}$, with $r_1<i\le r_1+r_2$, for each pair of complex conjugate embeddings. Identifying $\CC$ with $\RR^2$, we define a map $\sigma:\Phi\rar\RR^{d+1}$ by
\begin{equation*}
\sigma(\alpha) = (\sigma_1(\alpha),\ldots ,\sigma_{r_1+r_2}(\alpha)).
\end{equation*}
This map is injective, and the set
\begin{equation*}
\Gamma=\sigma(\ZZ_\Phi)
\end{equation*}
is called the Minkowski embedding of the ring of integers of $\Phi$ into $\RR^{d+1}$. It is a discrete subgroup of $\RR^{d+1}$, and the quotient $\RR^{d+1}/\Gamma$ has a measurable fundamental domain of finite volume. In other words, $\Gamma$ is a lattice in $\RR^{d+1}$. 

Let $k \in \ZZ$ be chosen so that for each $1 \leq j \leq d$ we have $k  \omega_j \in \ZZ_\Phi$.  Notice that
\begin{align}\label{eqn.MPhi}
\mathbb{M}_\Phi := \{\alpha \in \Phi \, : \, k  \alpha \in \ZZ_\Phi\}
\end{align}
is a $\ZZ_\Phi$ module that contains $\omega_1,\ldots,\omega_d$.  In particular, for any $t \ge 1$ each spacing $\Delta_i(t) \in \mathbb{M}_\Phi$ and moreover
$x \Delta_i(t) \in \mathbb{M}_\Phi$ for any $x \in \ZZ_\Phi$.  The image
\begin{align}\label{eqn.ZModLat}
\Gamma'=\sigma(\mathbb{M}_\Phi)
\end{align}
is a lattice because $\Gamma' = \frac{1}{k} \Gamma$, with $\Gamma$ a lattice.

Next, let $\ZZ_\Phi^\times$ denote the group of multiplicative units of $\ZZ_\Phi$. By the Dirichlet unit theorem, this group has rank $r_1+r_2-1$. Consider the map $\varphi:\ZZ_\Phi^\times\rar\RR^{r_1+r_2}$ defined by
\begin{equation}\label{EQN:DEF_VARPHI}
\varphi (u)=\left(\log\left|\sigma_1(u)\right|,\ldots ,\log\left|\sigma_{r_1+r_2}(u)\right|\right).
\end{equation}
This map is well defined, since $|\sigma_i(u)|\not= 0$. The norm of any unit is $\pm 1$, so the image of $\varphi$ is contained in the hyperplane in $\RR^{r_1+r_2}$ with equation
\begin{equation}\label{eqn.Hyper}
x_1+\cdots +x_{r_1}+2x_{r_1+1}+\cdots +2x_{r_1+r_2}=0.
\end{equation} 
Furthermore, the image of $\varphi$ is a lattice in this hyperplane (see the proof of \cite[Theorem 11]{Swin2001}). All of these facts will be useful to us in what follows.

\section{Proof of Part (1) of  Theorem A' (Uniform Labeling)}\label{sec.PfSteinFinite}
Suppose without loss of generality that $t\ge 1.$ Since $\bm{\omega}\in\RR^d$ we know that $r_1\ge 1$, so let us assume that $\sigma_1:\Phi\rar\RR$ is the trivial embedding which maps each number $\omega_j$ to itself. Each spacing $\Delta_i(t)$ has the form
\begin{equation*}
\Delta_i(t)=(\bm{m}-\bm{m}')\cdot\bm{\omega},
\end{equation*}
for some $\bm{m},\bm{m}'\in M(t)$. It follows from this and \eqref{eqn.TransBd} that there exists a constant $C>0$, which does not depend on $t$, with the property that for each spacing $\Delta_i$, we have that
\begin{equation*}
|\sigma_1(\Delta_i)|\le \frac{C}{t^d}.
\end{equation*}
Also, since $|\bm{m}-\bm{m}'|$ is bounded by a constant times $t$, and the maps $\sigma_j$ are homomorphisms, it immediately follows that 
\begin{equation*}
 |\sigma_j(\Delta_i)|\le Ct.
\end{equation*}
for $2\le j\le r_1+r_2$.

Since $\varphi (\ZZ_\Phi^\times)$ is a lattice in the hyperplane defined by \eqref{eqn.Hyper}, there is a constant $C'>0$ with the property that, for any point $\bm{x}\in\RR^{r_1+r_2}$ satisfying \eqref{eqn.Hyper}, there is an element of $\varphi (\ZZ_\Phi^\times)$ in the ball of radius $C'$ centered at $\bm{x}$. Using the fact that $r_1+2r_2=d+1$, we apply this observation with
\begin{equation}\label{EQN:X_OF_T}
\bm{x}=(d\log t,-\log t,\ldots ,-\log t).
\end{equation}
We thus deduce that there is a unit $u=u(t)$ with the properties that
\begin{equation*}
|\sigma_1(u)|\le e^{C'}t^d
\end{equation*}
and, for $2\le j\le r_1+r_2$, that
\begin{equation*}
|\sigma_j(u)|\le \frac{e^{C'}}{t}.
\end{equation*}
It follows that $\sigma(u \Delta_i)$ is a point of the set $\Gamma'$ from \eqref{eqn.ZModLat}, which lies in a cube $\mc{C}$ of side length $2Ce^{C'}$ centered at the origin in $\RR^{d+1}$. Let $\S$ be the collection of all elements of $\mathbb{M}_\Phi$  (defined in (\ref{eqn.MPhi})) 
whose images under $\sigma$ lie in $\mc{C}$. Since $\Gamma'$ is a lattice and $\sigma$ is injective, the set $\S$ is finite. The statement of the theorem thus follows.  \qed (Part (1) of Theorem A')


\section{Proof of Part (2) of Theorem A' (Quasiperiodicity)}\label{sec.QP}

\noindent
Several times in this section we will need to refer to the muliplicative inverse of $u(t)$ and 
thus we will 
denote it by $u_1 \equiv u_1(t):= u(t)^{-1}$.

\vspace{0.1in}

Let ${\bm n}:  \Phi \rightarrow \mathbb{Q}^{d+1}$
denote the expansion of an element of $\Phi$ in terms of the basis $1,\omega_1,\ldots,\omega_d$.  That is, for any $\alpha \in \Phi$, 
\begin{align*}
{\bm n}(\alpha) = (n_0,n_1,\ldots,n_d) \qquad \mbox{iff} \qquad \alpha = n_0 + n_1 \omega_1 + \ldots + n_d \omega_d.
\end{align*}
Let ${\bm m}:  \Phi \rightarrow \mathbb{Q}^{d}$ be the ``truncated expansion'' of $\alpha$ given by
\begin{align*}
{\bm m}(\alpha) = (n_1,\ldots,n_d) \qquad \mbox{if} \qquad {\bm n}(\alpha) = (n_0,n_1,\ldots,n_d).
\end{align*}

The preliminary versions of Part (2) of Theorem A' from \cite{BlehHommJiRoedShen2012} are proved in three steps:
\begin{enumerate}
\item  Describing the proportions of spacings at a given scale $t\ge 1$ in terms of suitable partitions of $M(t)$ and $R$, 
\item  Relating these partitions to $\frac{{\bm n}(u_1(t))}{t}$, the normalized expansion of $u_1(t)$ in the basis $1,\omega_1,\ldots,\omega_d$, and
\item  Analysis of the asymptotic behavior of $\frac{{\bm n}(u_1(t))}{t}$ as $t \rightarrow \infty$.
\end{enumerate}
Steps (1) and (2) carry over directly to our setting.  We will describe them in
Sections \ref{subsec.Partitions} and~\ref{subsec.glue}, with a discussion of any necessary adaptations.
However, Step (3) requires some new ideas, which we present in Section \ref{subsec.dynamics},
thus completing the proof of Part (2) of Theorem A'.
(Section \ref{subsec.matrix} presents some lemmas that are needed in Sections \ref{subsec.glue} and \ref{subsec.dynamics}.)

\subsection{Partitions of $M(t)$ and $R$.}\label{subsec.Partitions}
For any $1 \leq j \leq J$, let $Y_j(t)$ be the set of numbers $y_i(t)$ such that $\delta_i(t) = y_{i+1}(t) - y_{i}(t) = s_j u_1(t)$.  Here, $u_1(t) = u(t)^{-1}$, where $u(t)$
is the unit from Part (1) of Theorem~A' and $s_j$ is an element of the finite set $\S$, which has been ordered so that $s_1<\cdots <s_J$.

Let $M_j(t)$ be the set of vectors ${\bm m} \in M(t)$ such that $\{{\bm m} \cdot {\bm \omega}\} \in Y_j(t)$.  Up to the single point corresponding to the largest element $y_{\ell(t)}(t) \in Y(t)$ we have that
\begin{align*}
\bigsqcup_{j=1}^J M_j(t) = M(t),
\end{align*}
so, by a slight abuse of notation, we will call $\{M_j(t)\}$ a partition of $M(t)$.  We refer the reader to \cite[Fig. 1]{BlehHommJiRoedShen2012} for an explicit example.
We conclude that at scale $t \ge 1$ the proportion of times that the rescaled spacing $s_j \in \S$ occurs is 
\begin{align*}
p_j(t) = \frac{|M_j(t)|}{|M(t)|-1}.
\end{align*}

We will need the following result from \cite{BlehHommJiRoedShen2012}.
\begin{proposition}[Prop.\ 5.1 from \cite{BlehHommJiRoedShen2012}]\label{PROP:BLEHER_5_1}
For any $1 \leq j \leq J$ let 
\begin{align*}
{\bm v}_j(t) = {\bm m}(s_j u_1(t)).
\end{align*}
Then we have
\begin{align}\label{EQN:PARTITIONS}
M_j(t) = [M(t) \cap (M(t) - {\bm v}_j(t))] \setminus \bigcup_{i=1}^{j-1}(M(t) - {\bm v}_i(t)),
\end{align}
where $\Omega + {\bm v}$ is defined to be $\{{\bm u} + {\bm v} \, : \, {\bm u} \in \Omega\}$ for any $\Omega \subset \mathbb{R}^d$ and ${\bm v} \in  \mathbb{R}^d$.
\end{proposition}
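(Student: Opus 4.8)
The plan is to verify \eqref{EQN:PARTITIONS} directly, in the spirit of \cite[Prop.\ 5.1]{BlehHommJiRoedShen2012}. Fix a scale $t>1$ and suppress it from the notation, writing $y_i=y_i(t)$, $\delta_i=\delta_i(t)$, ${\bm v}_j={\bm v}_j(t)$, $u_1=u_1(t)$, and so on; we may also assume $u(t)>0$ (otherwise replace $u(t)$ by $-u(t)$, which does not affect Part (1) of Theorem~A'), so that $u_1>0$ and the ordering $s_1<\cdots <s_J$ lists the rescaled spacings in increasing size. The goal is to show that, for each $j$ with $s_j$ realized as a spacing at scale $t$, a vector $\bm{m}\in M(t)$ lies in $M_j(t)$ exactly when the point $\{\bm{m}\cdot\bm{\omega}\}$ has a point of $Y(t)$ immediately to its right at distance $s_j u_1$ but no point of $Y(t)$ to its right at the smaller distance $s_i u_1$ for any $i<j$; these two conditions translate into $\bm{m}\in M(t)\cap(M(t)-{\bm v}_j)$ and $\bm{m}\notin\bigcup_{i<j}(M(t)-{\bm v}_i)$ respectively. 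The translation uses two facts that follow from the $\QQ$-linear independence of $1,\omega_1,\ldots ,\omega_d$: first, $\bm{m}\mapsto\{\bm{m}\cdot\bm{\omega}\}$ is injective on $\ZZ^d$, so it is a bijection of $M(t)$ onto $Y(t)$ and any $\bm{w}\in\ZZ^d$ with $\{\bm{w}\cdot\bm{\omega}\}\in Y(t)$ already lies in $M(t)$; second, every spacing has integer coordinates in the basis $1,\omega_1,\ldots ,\omega_d$, since $\delta_i=(\bm{m}_{i+1}-\bm{m}_i)\cdot\bm{\omega}+n_0$ with $\bm{m}_i,\bm{m}_{i+1}$ the $\ZZ^d$-preimages of $y_i,y_{i+1}$ and $n_0\in\ZZ$, so that ${\bm v}_j={\bm m}(s_j u_1)\in\ZZ^d$ and ${\bm v}_j\cdot\bm{\omega}\equiv s_j u_1\pmod 1$ whenever $s_j u_1$ is a spacing.

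For the inclusion $M_j(t)\subseteq[M(t)\cap(M(t)-{\bm v}_j)]\setminus\bigcup_{i<j}(M(t)-{\bm v}_i)$, take $\bm{m}\in M_j(t)$, say $\{\bm{m}\cdot\bm{\omega}\}=y_\ell$ with $\ell<|M(t)|$ and $\delta_\ell=s_j u_1$. Then $\{(\bm{m}+{\bm v}_j)\cdot\bm{\omega}\}=\{y_\ell+s_j u_1\}=y_{\ell+1}\in Y(t)$ (no wrap-around, since $y_{\ell+1}<1$), so $\bm{m}+{\bm v}_j\in M(t)$ by the first fact; hence $\bm{m}\in M(t)\cap(M(t)-{\bm v}_j)$. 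If in addition $\bm{m}+{\bm v}_i\in M(t)$ for some $i<j$, then $\{y_\ell+s_i u_1\}\in Y(t)$; but $0<s_i u_1<s_j u_1=\delta_\ell$, so $\{y_\ell+s_i u_1\}=y_\ell+s_i u_1$ falls strictly between the consecutive points $y_\ell$ and $y_{\ell+1}$ of $Y(t)$, which is impossible. Thus $\bm{m}$ lies in the claimed set.

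For the reverse inclusion, let $\bm{m}$ lie in the right-hand side and write $\{\bm{m}\cdot\bm{\omega}\}=y_\ell$. From $\bm{m}+{\bm v}_j\in M(t)$ we get $\{y_\ell+s_j u_1\}\in Y(t)$. Suppose first $y_\ell+s_j u_1<1$. Then this value equals $y_\ell+s_j u_1>y_\ell$, so it is $y_{\ell'}$ with $\ell'\ge\ell+1$, forcing $s_j u_1\ge\delta_\ell$ and in particular $\ell<|M(t)|$. Were $s_j u_1>\delta_\ell$, then by Part (1) of Theorem~A' we could write $\delta_\ell=s_{i_0}u_1$ with $s_{i_0}u_1<s_j u_1$, hence $i_0<j$, and then $\{(\bm{m}+{\bm v}_{i_0})\cdot\bm{\omega}\}=\{y_\ell+\delta_\ell\}=y_{\ell+1}\in Y(t)$ would give $\bm{m}\in M(t)-{\bm v}_{i_0}$, contradicting $\bm{m}\notin\bigcup_{i<j}(M(t)-{\bm v}_i)$; therefore $\delta_\ell=s_j u_1$, i.e.\ $\bm{m}\in M_j(t)$. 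Suppose instead $y_\ell+s_j u_1\ge 1$. Since $s_j u_1<1$, the point $\{y_\ell+s_j u_1\}$ then lies below $y_\ell$; and if $y_\ell$ were not the largest element of $Y(t)$, the spacing at $y_\ell$ would satisfy $\delta_\ell=y_{\ell+1}-y_\ell<1-y_\ell\le s_j u_1$, and writing $\delta_\ell=s_{i_0}u_1$ with $i_0<j$ would again produce $\bm{m}\in M(t)-{\bm v}_{i_0}$, a contradiction. So this case occurs only when $\bm{m}$ is the preimage of the largest element $y_{|M(t)|}$ — precisely the point excluded by the convention preceding the proposition. Hence \eqref{EQN:PARTITIONS} holds, up to that single point, for every $j$ with $s_j$ realized at scale $t$. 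For the remaining $j$ one has $M_j(t)=\emptyset$; since the sets on the right of \eqref{EQN:PARTITIONS} are pairwise disjoint (the $j'$-th of them, for $j'>j$, subtracts $M(t)-{\bm v}_j$, which contains the $j$-th) and, over the realized $j$, their union already exhausts $M(t)$ up to the largest point, the right-hand side for any non-realized $j$ is also empty up to that point, and \eqref{EQN:PARTITIONS} holds in full.

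I expect the only genuinely delicate point to be the wrap-around bookkeeping — keeping track of whether $y_\ell+s_j u_1$ stays below $1$ — which is exactly why the single largest point of $Y(t)$ must be set aside; everything else is a routine consequence of the two structural facts at the start, namely ${\bm v}_j\in\ZZ^d$ and that any lattice point whose $\bm{\omega}$-image lands in $Y(t)$ is already in $M(t)$, both coming from the $\QQ$-linear independence of $1,\omega_1,\ldots ,\omega_d$.
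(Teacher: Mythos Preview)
The paper does not itself prove this proposition; it is quoted from \cite{BlehHommJiRoedShen2012}, with only the remark about the sign convention added. Your argument is the natural direct verification and is essentially the proof one expects to find in the cited source: translating ``the spacing to the right of $y_\ell$ equals $s_ju_1$'' into the two set-theoretic conditions $\bm m\in M(t)\cap(M(t)-\bm v_j)$ and $\bm m\notin\bigcup_{i<j}(M(t)-\bm v_i)$, checking both inclusions, and handling the wrap-around at the top of the interval by isolating the single largest point of $Y(t)$. Your closing disjointness/exhaustion argument for non-realized labels is a clean way to finish.

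One technical point deserves a flag. In the forward inclusion you write ``if $\bm m+\bm v_i\in M(t)$ for some $i<j$, then $\{y_\ell+s_iu_1\}\in Y(t)$; but $0<s_iu_1<s_ju_1\ldots$''. This uses both the congruence $\bm v_i\cdot\bm\omega\equiv s_iu_1\pmod 1$ and the inequality $s_iu_1>0$, which you yourself carefully qualified earlier as holding ``whenever $s_iu_1$ is a spacing'' --- but here $i<j$ may index a \emph{non}-realized label. With $\S$ as constructed in Section~\ref{sec.PfSteinFinite} of the present paper (all elements of $\mathbb M_\Phi$ whose Minkowski image lies in a fixed cube, hence including $0$, negative elements, and possibly elements whose $n_0$-coordinate is non-integral), neither assertion is guaranteed for such $i$, and your forward inclusion does not literally go through; indeed, if $0\in\S$ then $\bm v_1=\bm 0$ and the right-hand side of \eqref{EQN:PARTITIONS} for $j=1$ equals all of $M(t)$ while $M_1(t)=\emptyset$. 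This is an artifact of how $\S$ has been enlarged in this paper rather than a flaw in your method: restricting $\S$ to positive labels with integer expansion (in particular, to labels actually realized at some scale, as in \cite{BlehHommJiRoedShen2012}) restores your argument verbatim, and for the paper's purposes the discrepancy is harmless since the proposition feeds only into the volume estimates of Propositions~\ref{PROP:BLEHER_5_2}--\ref{PROP:BLEHER_5_3}, where superfluous labels contribute empty regions.
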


\begin{remark}
Our formula for $M_j(t)$ above has minus signs where the analogous formula
in \cite{BlehHommJiRoedShen2012} has plus signs.   The reason is that we define
$\delta_i(t) = y_{i+1}(t)-y_i(t)$ while $y_i(t)-y_{i-1}(t)$ is used in \cite{BlehHommJiRoedShen2012}.
\end{remark}

Denote the power set of our region $R$ by $\mathcal{P}(R)$ and let 
\begin{align*}
P : (\mathbb{R}^d)^J \rightarrow \mathcal{P}(R)^J
\end{align*}
be the mapping which sends the $J$-tuple of vectors ${\bm v} = ({\bm v_1},\ldots,{\bm v_J})$ to the $J$-tuple $(P_1({\bm v}),\ldots,P_J({\bm v}))$ of subsets of $R$ where,
\begin{align*}
P_j({\bm v}) = [R \cap (R - {\bm v_j})] \setminus \bigcup_{i=1}^{j-1}(R - {\bm v_i}).
\end{align*}
\begin{proposition}[Prop.\ 5.2 from \cite{BlehHommJiRoedShen2012}]\label{PROP:BLEHER_5_2}
If 
\begin{align}\label{EQN:DEF_V_T}
{\bm v} := {\bm v}(t) = \left(\frac{{\bm m}(s_1 u_1(t))}{t},\ldots,\frac{{\bm m}(s_J u_1(t))}{t}\right), 
\end{align}
then
\begin{align}
p_j(t) - \frac{{\rm volume}(P_j({\bm v}))}{{\rm volume}(R)} = \frac{|M_j(t)|}{|M(t)|-1}  - \frac{{\rm volume}(P_j({\bm v}))}{{\rm volume}(R)} = \mathcal{O}\left(\frac{1}{t}\right).
\end{align}
\end{proposition}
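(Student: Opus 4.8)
The plan is to reduce the statement to a lattice-point-counting estimate for the homothetic dilation $R(t)$ of a fixed bounded convex region. The key observation is that Proposition~\ref{PROP:BLEHER_5_1} already identifies $M_j(t)$ exactly as an intersection/difference of translates $M(t)\cap(M(t)-{\bm v}_j(t))\setminus\bigcup_{i<j}(M(t)-{\bm v}_i(t))$, where ${\bm v}_j(t) = {\bm m}(s_ju_1(t))$, while the set $P_j({\bm v})$ with ${\bm v}={\bm v}(t)/t$ as in \eqref{EQN:DEF_V_T} is the exact analogue built from the continuous region $R$ after rescaling everything by $t$. Concretely, since $M(t) = R(t)\cap\ZZ^d = tR\cap\ZZ^d$, a translate $M(t)-{\bm v}_j(t)$ equals $(t(R-{\bm v}_j(t)/t))\cap(\ZZ^d-{\bm v}_j(t))$. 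Because the formulas for $M_j(t)$ and $P_j({\bm v})$ are built by the same Boolean operations (finite intersections, unions, and set differences) applied to translates of $R(t)\cap\ZZ^d$ versus translates of $R$, it suffices to prove a single clean lemma: for a fixed bounded convex $R$ and any finite collection of translation vectors ${\bm w}_1,\dots,{\bm w}_J$ with $|{\bm w}_i|$ bounded independently of $t$, the number of integer points of $t\cdot\big(\text{Boolean combination of the }(R-{\bm w}_i)\big)$ differs from $t^d\cdot{\rm volume}$ of that same Boolean combination by $\mathcal{O}(t^{d-1})$.

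The key steps, in order, would be: (i) record that $\sup_t|{\bm v}_j(t)|/t<\infty$, which holds because $u_1(t)=u(t)^{-1}$ has $|\sigma_1(u_1(t))|\le e^{C'}/t^d$ and $|\sigma_\ell(u_1(t))|\le e^{C'}t$ for $\ell\ge 2$ by the estimates from Section~\ref{sec.PfSteinFinite} applied to $u_1$ in place of $u$, so $s_ju_1(t)$ has all conjugates bounded by a constant times $t$, hence its coordinate vector ${\bm n}(s_ju_1(t))$ — obtained by a fixed linear change of coordinates from the conjugates — has norm $\mathcal{O}(t)$, and the same for the truncation ${\bm m}(\cdot)$; (ii) express $M(t)-{\bm v}_j(t) = (t R_j(t))\cap(\ZZ^d - {\bm v}_j(t))$ where $R_j(t) := R - {\bm v}_j(t)/t$ is a translate of $R$ by a vector of bounded norm; (iii) observe that the region $\widetilde{M}_j(t):=[\,tR\cap tR_j(t)\,]\setminus\bigcup_{i<j} tR_i(t)$ has $M_j(t) = \widetilde M_j(t)\cap\ZZ^d$ up to adjusting integer shifts, and that $\tfrac1t\widetilde M_j(t) = P_j({\bm v}(t))$ exactly; (iv) apply the lattice-point counting estimate to conclude $|M_j(t)| = {\rm volume}(tP_j({\bm v})) + \mathcal{O}(t^{d-1}) = t^d\,{\rm volume}(P_j({\bm v})) + \mathcal{O}(t^{d-1})$, and similarly $|M(t)| = t^d\,{\rm volume}(R) + \mathcal{O}(t^{d-1})$; (v) divide: $p_j(t) = |M_j(t)|/(|M(t)|-1) = {\rm volume}(P_j({\bm v}))/{\rm volume}(R) + \mathcal{O}(1/t)$, using that both numerator and denominator are of exact order $t^d$ (the denominator bounded below because $R$ has nonempty interior).

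For step (iv), the lattice-point estimate I would invoke is the standard fact that for a bounded set $\Omega\subset\RR^d$ whose boundary is contained in a finite union of graphs of Lipschitz functions (in particular any finite Boolean combination of translates of a fixed bounded convex body — convex bodies have Lipschitz boundary, and Boolean operations only add finitely many such pieces), one has $\#(t\Omega\cap\ZZ^d) = t^d\,{\rm volume}(\Omega) + \mathcal{O}(t^{d-1})$, with the implied constant depending only on the Lipschitz data; one also needs to handle the harmless fact that the lattice in step (ii) is a shifted copy $\ZZ^d-{\bm v}_j(t)$ rather than $\ZZ^d$ itself, but translating the region instead of the lattice reduces this to the $\ZZ^d$ case with no change in the error term. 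The main obstacle, and the place where care is genuinely required, is bookkeeping in steps (ii)--(iii): the sets $M_j(t)$ involve translates of the \emph{discrete} set $M(t)$ by the integer-nonconstant vectors ${\bm v}_j(t)$, so one must check that the set difference between the naive continuous model $\widetilde M_j(t)$ and the actual $M_j(t)$ consists only of integer points within bounded distance of the boundary of $tR$ (and of the finitely many boundary hypersurfaces $\partial(tR_i(t))$), a set which again has cardinality $\mathcal{O}(t^{d-1})$ and is therefore absorbed into the error. Since this is Proposition~5.2 of \cite{BlehHommJiRoedShen2012} and "Steps (1) and (2) carry over directly to our setting," the cleanest route is to cite that reference for the counting argument and only verify the one new input, namely the bound $|{\bm v}_j(t)|=\mathcal{O}(t)$ for our unit $u(t)$ of Part~(1), which follows verbatim from the conjugate-size estimates already established.
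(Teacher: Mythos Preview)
Your proposal is correct and follows essentially the same route as the paper: reduce to the lattice-point counting estimate $|t\Omega \cap \ZZ^d| = t^d\,{\rm volume}(\Omega) + \mathcal{O}(t^{d-1})$ for convex $\Omega$, apply it (via inclusion--exclusion or the Lipschitz-boundary version) to the Boolean combination defining $P_j$, and divide. The paper's own treatment is terser---it simply remarks that the $d=2$ area estimate from \cite{BlehHommJiRoedShen2012} generalizes to dimension $d$ with error $\mathcal{O}(t^{d-1})$ and that this is the only change needed---so your explicit verification that $|{\bm v}_j(t)|=\mathcal{O}(t)$ via the conjugate-size bounds on $u_1(t)$ is a welcome piece of bookkeeping rather than a departure in method.
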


The proof of \cite[Prop.\ 5.4]{BlehHommJiRoedShen2012} uses the estimate that for any convex $\Omega \subset \mathbb{R}^2$ we have
\begin{align*}
{\rm area}(t \Omega) - |t \Omega \cap \mathbb{Z}^2| = \mathcal{O}(t).
\end{align*}
This estimate adapts to work for a convex region $\Omega
\subset \mathbb{R}^d$, with the area becoming volume and the error becoming $\mathcal{O}(t^{d-1})$.  This is
the only change needed to adapt the proof of \cite[Prop.\  5.4]{BlehHommJiRoedShen2012} to the present setting. 

\begin{proposition}[Prop.\ 5.3 from \cite{BlehHommJiRoedShen2012}]\label{PROP:BLEHER_5_3}
The function $P$ is Lipschitz continuous with respect to the infinity norm on $(\mathbb{R}^d)^J$ and the metric 
\begin{align*}
d(P^{(1)},P^{(2)}) = \sum_{j=1}^J {\rm vol}(P^{(1)}_j \Delta P^{(2)}_j)
\end{align*}
on $J$-tuples of subsets of $R$,
where $\Delta$ denotes the symmetric difference of sets.
\end{proposition}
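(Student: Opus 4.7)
The plan is to reduce the claim to a single, purely geometric estimate about the region $R$ and then propagate it through the set operations used to build the $P_j$.  More precisely, I would first establish the following key lemma: because $R\subset\RR^d$ is bounded and convex, there exists $L_R>0$ such that
\begin{equation}\label{EQN:PLAN_TRANSLATE}
{\rm vol}\bigl(R\,\Delta\,(R+{\bm w})\bigr)\le L_R\,|{\bm w}|
\qquad \text{for every } {\bm w}\in\RR^d.
\end{equation}
The argument I have in mind is the standard boundary-layer estimate: for small $|{\bm w}|$ the symmetric difference is contained in the $|{\bm w}|$-neighborhood of $\partial R$, whose volume is controlled by the $(d-1)$-dimensional surface measure of the boundary of a convex body times $|{\bm w}|$; for large $|{\bm w}|$ I would fall back on the trivial bound $2\,{\rm vol}(R)$ and take $L_R$ to be the larger of the two resulting constants.

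With \eqref{EQN:PLAN_TRANSLATE} in hand, I would exploit the fact that intersection, finite unions, and set difference are each non-expansive in the symmetric difference pseudo-metric, via the elementary inclusions
\[
(A\cap B)\,\Delta\,(A'\cap B')\ \subseteq\ (A\,\Delta\,A')\cup(B\,\Delta\,B'),
\]
and analogously for $\cup$ and $\setminus$. Since $P_j({\bm v})=[R\cap(R-{\bm v_j})]\setminus\bigcup_{i<j}(R-{\bm v_i})$ is assembled from $R$ and the $j$ translates $R-{\bm v_i}$, $i\le j$, using exactly these three operations, iterating the inclusions will yield
\[
{\rm vol}\bigl(P_j({\bm v})\,\Delta\,P_j({\bm v}')\bigr)\le
\sum_{i=1}^{j}{\rm vol}\bigl((R-{\bm v_i})\,\Delta\,(R-{\bm v_i}')\bigr)
\le L_R\sum_{i=1}^{j}|{\bm v_i}-{\bm v_i}'|,
\]
where the last inequality uses translation invariance of Lebesgue measure and \eqref{EQN:PLAN_TRANSLATE} applied to ${\bm w}={\bm v_i}'-{\bm v_i}$. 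Summing over $j=1,\ldots,J$ then produces the bound $d\bigl(P({\bm v}),P({\bm v}')\bigr)\le L_R\,J^2\,\|{\bm v}-{\bm v}'\|_\infty$, which is Lipschitz continuity with respect to the infinity norm.

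The only substantive step is the boundary-layer estimate \eqref{EQN:PLAN_TRANSLATE}; everything else is a clean propagation of error through set operations. Because $R$ is assumed bounded and convex with non-empty interior I do not anticipate a real obstacle there, and the argument is essentially dimension-free: compared with the $d=2$ version in \cite{BlehHommJiRoedShen2012}, the only change is that ``area'' becomes ``volume'' and the perimeter bound becomes the $(d-1)$-dimensional surface measure of the boundary of a convex body in $\RR^d$.
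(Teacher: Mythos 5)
Your proof is correct. The paper itself simply cites the proof of Proposition~5.3 from Bleher--Homma--Ji--Roeder--Shen, remarking that the only adaptation from $d=2$ to general $d$ is that the polynomial $Q(x)=\left(1+\frac{x}{a}\right)^d-1$ appearing there is still Lipschitz on $[0,1]$. That clue reveals that the cited proof obtains the crucial translation estimate $\mathrm{vol}\bigl(R\,\Delta\,(R+\bm{w})\bigr)=O(|\bm{w}|)$ by inscribing a ball of radius $a$ in $R$ and comparing the $|\bm{w}|$-neighborhood of $R$ with a dilation of $R$ by the factor $1+|\bm{w}|/a$, so the excess volume is at most $Q(|\bm{w}|)\,\mathrm{vol}(R)$. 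You reach the same bound by a boundary-layer/surface-area argument instead; the remainder of your argument --- propagating the bound through the symmetric-difference inclusions for $\cap$, $\cup$, and $\setminus$ --- is the canonical reduction and is presumably shared with the cited proof. The dilation trick is marginally more self-contained, as it avoids invoking the (true but not trivial) fact that a bounded convex body has finite surface measure, while your version offers a slightly more transparent geometric picture. One small cleanup worth noting: the containment of $R\,\Delta\,(R+\bm{w})$ in the $|\bm{w}|$-neighborhood of $\partial R$ actually holds for \emph{all} $\bm{w}$, not just small ones --- the small/large dichotomy is needed only because that neighborhood's volume is not globally linear in $|\bm{w}|$, and your trivial $2\,\mathrm{vol}(R)$ fallback already handles that correctly.
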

The only change needed to adapt the proof of Prop.\ 5.3 from
\cite{BlehHommJiRoedShen2012} from dimension two to dimension $d \geq 2$ is
that one needs to remark that the polynomial
$Q(x)=\left(1+\frac{x}{a}\right)^d-1$ which appears in that proof is Lipschitz on the interval $[0,1]$.

\subsection{Matrix representation of multiplication by elements of $\Phi$ and their logarithms.}\label{subsec.matrix}

Multiplication by any non-zero $a \in \Phi$ corresponds to an invertible linear mapping from $\Phi$ to itself.  

\begin{lemma}\label{LEM:COMMUTING_MATRICES}
For any two non-zero $a,b \in \Phi$ the $(d+1) \times (d+1)$ dimensional matrices $A$ and $B$ representing multiplication
 by $a$ and $b$ in terms of the basis $1,\omega_1,\ldots,\omega_d$ commute.
\end{lemma}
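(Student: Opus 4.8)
The plan is to realize multiplication by $a$ and $b$ as elements of a commutative ring, so that the matrices representing them automatically commute. First I would observe that $\Phi$, being a field, is in particular a commutative ring, and that for each fixed $c \in \Phi$ the map $L_c : \Phi \to \Phi$ given by $L_c(x) = cx$ is $\QQ$-linear (since multiplication distributes over addition and commutes with scalars from $\QQ$). The key point is that $c \mapsto L_c$ is a ring homomorphism from $\Phi$ into $\mathrm{End}_{\QQ}(\Phi)$: indeed $L_{a}\circ L_{b}(x) = a(bx) = (ab)x = L_{ab}(x)$, and likewise $L_b \circ L_a = L_{ba} = L_{ab}$ because $ab = ba$ in the field $\Phi$. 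Hence $L_a \circ L_b = L_b \circ L_a$ as linear maps.

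Next I would pass from linear maps to matrices. Fixing the $\QQ$-basis $1,\omega_1,\ldots,\omega_d$ of $\Phi$ gives an isomorphism of $\QQ$-algebras $\mathrm{End}_{\QQ}(\Phi) \cong M_{d+1}(\QQ)$ under which $L_a \mapsto A$ and $L_b \mapsto B$, where $A$ and $B$ are exactly the matrices described in the statement. Since this isomorphism is multiplicative and $L_a \circ L_b = L_b \circ L_a$, we conclude $AB = BA$. Finally, I would note that invertibility of $A$ and $B$ (mentioned in the sentence preceding the lemma) follows because $a,b$ are nonzero in a field, so $L_a$ and $L_b$ have inverses $L_{a^{-1}}$ and $L_{b^{-1}}$; this is not strictly needed for the commutativity claim but justifies the surrounding discussion.

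There is essentially no obstacle here: the whole content is the commutativity of the field $\Phi$ together with the functoriality of ``multiplication-by-$c$.'' The one place to be slightly careful is to state clearly that the correspondence $c \mapsto (\text{matrix of } L_c)$ is a ring homomorphism rather than just a linear map, since that is what converts $ab = ba$ in $\Phi$ into $AB = BA$ in $M_{d+1}(\QQ)$. I would therefore keep the write-up to two or three sentences, emphasizing that identity.
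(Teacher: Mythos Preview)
Your proof is correct and is exactly the same approach as the paper's, just spelled out in more detail: the paper simply says ``This is an immediate consequence of commutativity $ab=ba$ in $\Phi$,'' and your write-up unpacks that one line by making the ring homomorphism $c\mapsto L_c$ explicit.
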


\begin{proof}
This is an immediate consequence of commutativity $ab = ba$ in $\Phi$.
\end{proof}


Combined with Lemma \ref{LEM:COMMUTING_MATRICES} the following lemma plays a crucial role in our proof.

\begin{lemma}\label{LEM:COMMUTING_LOGS}
For any $k \geq 1$ let $A_1,\ldots,A_k$ be a commuting collection of invertible $n \times n$ matrices.
Then, there exist $n \times n$ matrices 
$L_{1},\ldots,L_{k}$ such that
\begin{align}\label{EQN:EXPONENTIAL_OF_MATRIX}
A_j = {\rm e}^{L_j}
\end{align}
for each $1 \leq j \leq k$ and such that all $k$ matrices $L_{1},\ldots,L_{k}$ commute.
\end{lemma}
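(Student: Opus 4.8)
The strategy is to reduce to the case of a single Jordan block via simultaneous block-triangularization, and then write down explicit commuting logarithms block by block. First I would invoke the standard fact that a commuting family of matrices over $\CC$ can be simultaneously put in block upper-triangular form; in fact, since the $A_j$ commute and are invertible, we can decompose $\CC^n$ as a direct sum of generalized simultaneous eigenspaces, so that after a common change of basis each $A_j$ becomes block-diagonal, and on each common block every $A_j$ has a single eigenvalue $\lambda_j \neq 0$, i.e.\ $A_j = \lambda_j(I + N_j)$ with $N_j$ nilpotent. The $N_j$ still commute, so on each block it suffices to produce commuting logarithms there and then reassemble by direct sum.

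On a single such block, I would define $L_j := (\log \lambda_j) I + \log(I + N_j)$, where $\log \lambda_j$ is a fixed choice of complex logarithm (the same choice must be made consistently, but any choice works here since we only need \emph{some} logarithms) and $\log(I+N_j) := \sum_{m \geq 1} (-1)^{m+1} N_j^m / m$ is the terminating power series, which is a polynomial in $N_j$ because $N_j$ is nilpotent. Then $\mathrm{e}^{L_j} = \lambda_j \mathrm{e}^{\log(I+N_j)} = \lambda_j(I+N_j) = A_j$, using that the scalar part $(\log\lambda_j)I$ commutes with everything and that $\mathrm{e}^{\log(I+N)} = I+N$ for nilpotent $N$ (a formal power-series identity). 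For commutativity: each $L_j$ is a scalar plus a polynomial in $N_j$; since the $N_j$ all commute, any polynomial in $N_j$ commutes with any polynomial in $N_i$, and scalars commute with everything, so $L_iL_j = L_jL_i$. Reassembling the blocks gives commuting $L_1,\dots,L_k$ on all of $\CC^n$ with $A_j = \mathrm{e}^{L_j}$.

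One subtlety worth flagging: the simultaneous block-diagonalization "on each common generalized eigenspace each $A_j$ has a single eigenvalue" needs a short argument — it follows by applying the primary decomposition to one $A_j$ at a time and noting each factor is invariant under the others (since they commute), then intersecting, or equivalently by noting the common invariant subspaces come from the simultaneous generalized eigenspace decomposition of the commuting algebra they generate. This is standard linear algebra but should be stated. A cleaner alternative that avoids even this: simultaneously upper-triangularize the commuting family (Schur-type), write $A_j = D_j + T_j$ with $D_j$ the diagonal part (invertible since the $A_j$ are) and $T_j$ strictly upper triangular; then $A_j = D_j(I + D_j^{-1}T_j)$, but $D_j^{-1}T_j$ need not be nilpotent nor need the various $D_j$ commute with the $T_i$, so this route is actually messier — I would stick with the generalized-eigenspace decomposition.

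The main obstacle is purely organizational rather than deep: making precise that a commuting family decomposes $\CC^n$ into subspaces on which each member acts as "scalar plus nilpotent," and checking that the choices of scalar logarithms on different blocks cause no conflict (they don't, since distinct blocks are handled independently and then direct-summed). Everything else — the nilpotent logarithm series, the identity $\mathrm{e}^{\log(I+N)} = I+N$, and the commutativity of polynomials in commuting nilpotents — is routine.
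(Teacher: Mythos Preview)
Your proposal is correct and follows essentially the same approach as the paper's proof: simultaneous block-diagonalization into common generalized eigenspaces on which each $A_j$ is scalar plus nilpotent, followed by the Mercator-series logarithm on each block. The paper writes each block as $\lambda_j I + N_j$ rather than your $\lambda_j(I+N_j)$, and it sketches the simultaneous decomposition for $k=2$ explicitly, but the argument is otherwise identical to yours.
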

As usual, the matrix exponential in (\ref{EQN:EXPONENTIAL_OF_MATRIX}) is interpreted using the standard power series for ${\rm e}^z$. 
The matrices $L_1,\ldots,L_k$ are called {\em logarithms} of $A_1,\ldots,A_k$; see, for example \cite[Sec. \ 2.3]{Hall2003}.
Although it seems that this lemma should be well-known we could not find a suitable reference.  
We thank Evgeny Mukhin and Vitaly Tarasov for providing us with the following proof.

\begin{proof}
We claim that there is an invertible $n \times n$ matrix $P$ such that for every $1 \leq j \leq k$ we have
$A_j = P D_j P^{-1}$  with each $D_j$ a block diagonal matrix
\begin{align*}
D_j = {\rm diag}(B_{j,1},\ldots,B_{j,m}),
\end{align*}
with the size of the blocks independent of $1 \leq j \leq k$, and each block having the form
\begin{align*}
B_{j,\ell} = \lambda_{j,\ell} {\rm I} + N_{j,\ell},
\end{align*}
where $\lambda_{j,\ell} \in \mathbb{C} \setminus \{0\}$ and $N_{j,\ell}$ is a nilpotent matrix for each $1 \leq j \leq k$ and $1 \leq \ell \leq m$.  Here, ${\rm I}$ denotes the identity matrix
of the appropriate dimension.

Let us first see how this claim yields the desired result.  Notice that it suffices to find commuting logarithms of the block 
diagonal matrices $D_j$, for $1 \leq j \leq k$, because the desired $L_1,\ldots,L_k$ will then be obtained by conjugating by $P$.
Moreover, matrix exponentials respect block-diagonal structure, so it suffices to find mutually commuting matrix logarithms for $k$
commuting matrices of the form
\begin{align*}
M_j = \lambda_j  {\rm I} + N_{j},
\end{align*}
where $N_j$ is nilpotent for each $1 \leq j \leq k$.  One can do this using the Mercator series to define
\begin{align*}
{\mathcal L}_j := \log \lambda_j {\rm I} - \sum_{m=1}^\infty \frac{1}{m} \left(\frac{-N_{j}}{\lambda_j}\right)^m,
\end{align*}
for each $1 \leq j \leq k$.  (One can choose any complex logarithm $\log \lambda_j$ that one likes.)
The series converge because they terminate in finitely many steps, since each $N_{j}$ is nilpotent.  Moreover ${\mathcal L}_1\ldots,{\mathcal L}_k$ commute because the $N_1,\ldots,N_k$ commute.

Now we are left to establish the claim from the beginning of the proof. Existence of the matrix $P$ that simultaneously conjugates the $A_1,\ldots,A_k$
to the desired block diagonal form is a generalization of the well-known fact
that commuting diagonalizable matrices are simultaneously diagonalizable.  We
sketch it here in the case $k=2$, leaving the straightforward generalization to
larger $k$ to the reader.

By the Jordan decomposition we can write $\mathbb{R}^n$ as a direct sum of
generalized eigenspaces of $A_1$.  Let $V$ be any one of the
generalized eigenspaces of $A_1$, corresponding to eigenvalue $\lambda$.  By definition, it consists of the vectors
in $\mathbb{R}^n$ in the kernel of $(A_1 - \lambda {\rm I})^\ell$ for some $\ell \geq 1$.
On $V$ the matrix $A_1$ is conjugate to $\lambda {\rm I} + N$ for some nilpotent matrix $N$, by the Jordan form.

Commutativity of $A_1$ and $A_2$ implies that $A_2(V)$ is a subspace of $V$.
We can therefore decompose $V$ into generalized eigenspaces of $A_2$ and, for any such
subspace $W$ of $V$, commutativity of $A_1$ and $A_2$ implies $A_1 W$ is a
subspace of $W$.  In particular $N(W)$ is a subspace of $W$.

This proves that $\mathbb{R}^n$ can be decomposed into a direct sum of spaces that are simultaneously generalized eigenspaces
of $A_1$ and $A_2$, on each of which $A_1$ is conjugate to $\lambda {\rm I} + N_1$ and $A_2$ is conjugate to $\mu {\rm I} + N_2$
for some $\lambda, \mu \in \mathbb{C} \setminus \{0\}$ and nilpotent matrices $N_1$ and $N_2$.
\end{proof}

\subsection{Reduction to analysis of normalized expansions of $u_1(t)$} \label{subsec.glue}

Here we reduce the proof of Part (2) of Theorem A' to the proof of the following theorem.

\begin{theorem}\label{THM:QUASIPERIODICITY_V2}
Under the hypotheses of Theorem A'
there is an integer $0 \leq k \leq d+1$, a Lipschitz continuous function
\begin{align*}
g_3: \mathbb{T}^k \times [0,1]^r \rightarrow \mathbb{R}^{d+1},
\end{align*}
angles ${\bm \theta} = (\theta_1,\ldots,\theta_k) \in \mathbb{T}^k$, ``rates''
${\bm \beta} = (\beta_1,\ldots,\beta_r) \in \mathbb{R}^r$, and $0 < \alpha <
1$ such that
\begin{align*}
\frac{{\bm n}(u_1(t))}{t} = g_3\left({\bm \theta} \log t, \{ {\bm \beta} \log t \} \right) + \mathcal{O}(\alpha^{\log t}).
\end{align*}
\end{theorem}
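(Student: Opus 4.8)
The plan is to analyze the time evolution of the linear map $U_1(t)$ on $\RR^{d+1}$ that represents multiplication by $u_1(t) = u(t)^{-1}$ in the basis $1,\omega_1,\ldots,\omega_d$, and to extract $\bm{n}(u_1(t))/t$ from the first row of $U_1(t)$ applied to the coordinate vector of $1$. Recall from the proof of Part (1) that $u(t)^{-1} = \epsilon_1^{\lfloor \beta_1 \log t\rfloor}\cdots \epsilon_r^{\lfloor \beta_r\log t\rfloor}$ for a fixed basis $\epsilon_1,\ldots,\epsilon_r$ of a finite-index subgroup of $\ZZ_\Phi^\times$ and a suitable $\bm\beta$ determined by the geometry of the Dirichlet unit lattice (the vector $\bm x(t)$ from \eqref{EQN:X_OF_T} decomposes in the unit lattice with these integer coefficients plus a bounded remainder). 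Writing $E_i$ for the multiplication-by-$\epsilon_i$ matrix, we get $U_1(t) = E_1^{\lfloor \beta_1\log t\rfloor}\cdots E_r^{\lfloor \beta_r\log t\rfloor}$. The matrices $E_1,\ldots,E_r$ commute by Lemma \ref{LEM:COMMUTING_MATRICES}, so by Lemma \ref{LEM:COMMUTING_LOGS} there are commuting logarithms $L_i$ with $E_i = \mathrm{e}^{L_i}$. Hence $U_1(t) = \exp\!\big(\sum_{i=1}^r \lfloor\beta_i\log t\rfloor L_i\big)$.

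Next I would write $\lfloor \beta_i\log t\rfloor = \beta_i\log t - \{\beta_i\log t\}$ and split the exponent as $(\log t)\sum_i \beta_i L_i \;-\; \sum_i \{\beta_i\log t\} L_i$. Set $L := \sum_i \beta_i L_i$ and $N(\bm s) := \sum_i s_i L_i$ for $\bm s\in[0,1]^r$; these commute, so $U_1(t) = \mathrm{e}^{(\log t)L}\,\mathrm{e}^{-N(\{\bm\beta\log t\})}$. The factor $\mathrm{e}^{-N(\cdot)}$ is a bounded, real-analytic (in particular Lipschitz) function of $\{\bm\beta\log t\}\in[0,1]^r$. For the factor $\mathrm{e}^{(\log t)L}$ I would pass to the Jordan decomposition of $L$ over $\CC$: group the generalized eigenspaces by the real part of the eigenvalue. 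Since $u(t)^{-1}$ has $|\sigma_1(u(t)^{-1})| \asymp t^{-d}$ and $|\sigma_j(u(t)^{-1})|\asymp t$ for $2\le j\le r_1+r_2$ (from the construction in Section \ref{sec.PfSteinFinite}), the eigenvalues of $L$ are, up to the $2\pi i$ ambiguity in the logarithm, related to $\{-d,1,1,\ldots\}$ scaled appropriately; the key point is that $\mathrm{e}^{(\log t)L}$ acts like $t^{\lambda}$ on the eigenspace for eigenvalue $\lambda$. After dividing by $t$ (i.e.\ factoring out $\mathrm{e}^{(\log t)\cdot \mathrm{I}}$, the eigenvalue $+1$ being the dominant real part — this is where the normalization by $t$ in $\bm n(u_1(t))/t$ is exactly right), the surviving contributions are: (i) generalized eigenspaces on which $L - \mathrm{I}$ has purely imaginary eigenvalue $2\pi i\theta_1,\ldots,2\pi i\theta_k$, which must be genuinely diagonalizable on those spaces (no nontrivial nilpotent part, else $t\log t$ growth would violate the transference bound \eqref{eqn.TransBd}), contributing the torus factor $\mathrm{e}^{2\pi i\theta_j\log t}$; and (ii) eigenspaces with strictly negative real part, contributing $\mathcal O(t^{-c}\,(\log t)^{m}) = \mathcal{O}(\alpha^{\log t})$ for a suitable $0<\alpha<1$ absorbing the polynomial-in-$\log t$ Jordan block factors. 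Assembling the first row of the resulting matrix applied to $\bm n(1)$ and composing with the Lipschitz map $\mathrm{e}^{-N(\cdot)}$ yields the desired $g_3(\bm\theta\log t, \{\bm\beta\log t\})$, with $k$ and $\bm\theta$ as described and $\alpha$ governed by the largest real part of a non-dominant eigenvalue.

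The main obstacle I expect is the rigorous bookkeeping around the choice of matrix logarithm and the claim that the $+1$-eigenspace contributes \emph{only} purely imaginary perturbations with \emph{no} nilpotent part. A priori the Jordan structure of $L$ could have a nilpotent block attached to an eigenvalue of real part $1$, which would produce a factor growing like $(\log t)^m$ in $\bm n(u_1(t))/t$ — and this has to be ruled out. The way to do it is to feed such growth back into the hard constraints: the transference bound \eqref{eqn.TransBd} forces $|\sigma_1(\Delta_i)| = \mathcal O(t^{-d})$ uniformly, and since every $s_j u_1(t)\in\S\cdot\ZZ_\Phi^\times$ has $\sigma_1$-image of size $\asymp |\sigma_1(u_1(t))|$, we get $|\sigma_1(u_1(t))|=\mathcal O(t^{-d})$; combined with the product formula $\prod_j|\sigma_j(u_1(t))| = 1$ and the known sizes on the other embeddings, this pins down the growth of every entry of $U_1(t)$ to be $\mathcal O(t)$ with no logarithmic correction, which is exactly the statement that the relevant Jordan blocks are trivial. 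I would also need to check that $\bm\beta$ can indeed be chosen (using that $\varphi(\ZZ_\Phi^\times)$ is a full lattice in the hyperplane \eqref{eqn.Hyper}) so that $u(t)^{-1}$ has the claimed monomial form in the $\epsilon_i$ with the remainder genuinely bounded uniformly in $t$; this is essentially the content already used in Section \ref{sec.PfSteinFinite}, repackaged. Everything else — Lipschitz continuity of $g_3$, the value of $k$, and the big-$O$ term — then follows from the explicit Jordan-block formulas for $\mathrm{e}^{(\log t)L}$.
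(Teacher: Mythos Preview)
Your proposal is correct and follows essentially the same route as the paper: write $U_1(t)=A(t)\,\tilde U(t)$ with $A(t)=\exp\bigl(-\sum_i\{\beta_i\log t\}L_i\bigr)$ depending Lipschitz-continuously on $\{\bm\beta\log t\}$ and $\tilde U(t)=\mathrm e^{(\log t)L}$, then study the Jordan form of $L-\mathrm I$; the ``main obstacle'' you single out --- ruling out nontrivial Jordan blocks attached to purely imaginary eigenvalues by feeding a hypothetical $(\log t)^m$ growth back into the transference bound \eqref{eqn.TransBd} and the unit-norm constraint --- is precisely the content of Lemma~\ref{LEM:EIGENVALUES}, and your definition of $g_3$ matches \eqref{eqn.g3def}. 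One small slip: $\bm n(u_1(t))$ is the first \emph{column} $U_1(t)\bm e_1$, not the first row.
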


\begin{proof}[Proof of Part (2) of Theorem A' supposing Theorem \ref{THM:QUASIPERIODICITY_V2}]
Let us first summarize what Propositions \ref{PROP:BLEHER_5_1}-\ref{PROP:BLEHER_5_3} achieve.  Let
\begin{align*}
g_1: (\mathbb{R}^d)^J \rightarrow \mathbb{R}^J
\end{align*}
be given by
\begin{align*}
g_1({\bm v}) = \left(\frac{{\rm volume}(P_1({\bm v}))}{{\rm volume}(R)},\ldots,\frac{{\rm volume}(P_J({\bm v}))}{{\rm volume}(R)}  \right).
\end{align*}
Then, $g_1$ is Lipschitz continuous and satisfies that for any $t \ge 1$
\begin{align*}
g_1({\bm v}(t)) - (p_1(t),\ldots,p_J(t)) = \mathcal{O}(t^{-1}),
\end{align*}
where ${\bm v}(t)$ is given by (\ref{EQN:DEF_V_T}).

We now claim that there is a is a linear function
\begin{align*}
g_2: \mathbb{R}^{d+1} \rightarrow (\mathbb{R}^d)^J
\end{align*}
such that
\begin{align*}
{\bm v}(t) = g_2\left(\frac{{\bm n}(u_1(t))}{t} \right).
\end{align*}
For any $1 \leq j \leq J$ let $S_j$ denote the matrix expressing multiplication by $s_j \in \S$ in terms of the basis 
$1,\omega_1,\ldots,\omega_d$.  For any $1 \leq j \leq J$ we let the $j$-th component ${\bm v}_j(t)$ of ${\bm v}(t)$ be the projection of 
\begin{align*}
\frac{{\bm n}(s_j u_1(t))}{t} = S_j \frac{{\bm n}(u_1(t))}{t} \in \mathbb{R}^{d+1}
\end{align*}
onto its last $d$ components.

Therefore, given the function $g_3: \mathbb{T}^k \times [0,1]^r \rightarrow \mathbb{R}^{d+1}$
whose existence is asserted by Theorem \ref{THM:QUASIPERIODICITY_V2}, we can let $g = g_1 \circ g_2 \circ g_3$ so that
\begin{align*}
g\left({\bm \theta} \log t, \{ {\bm \beta} \log t \} \right) &= g_1\left(g_2\left(\frac{{\bm n}(u_1(t))}{t} + \mathcal{O}(\alpha^{\log t})
\right)\right) \\ &= g_1\left({\bm v}(t) + \mathcal{O}(\alpha^{\log t})\right)\\  &= (p_1(t),\ldots,p_J(t)) + \mathcal{O}(t^{-1}) + \mathcal{O}(\alpha^{\log t}),
\end{align*}
with the last equality using that $g_1$ is Lipschitz.  Since $t^{-1} = (1/{\rm e})^{\log t}$ this proves the claim.
\end{proof}

\subsection{Analysis of normalized expansions of $u_1(t)$ as $t \rightarrow \infty$.}\label{subsec.dynamics}
We will now use results from linear algebra to finish the proof of Theorem \ref{THM:QUASIPERIODICITY_V2}. We begin by deriving an explicit formula for $u_1(t) = u(t)^{-1}$.
Let $r = r_1+r_2-1$ denote the rank of the unit group $\mathbb{Z}_\Phi^\times$ and let $\epsilon_1,\ldots,\epsilon_r$ be a basis for a finite index subgroup of the multiplicative group
$\mathbb{Z}_\Phi^\times$.  Recall that the image of $\mathbb{Z}_\Phi^\times$ under the mapping $\varphi$ given in (\ref{EQN:DEF_VARPHI}) forms a lattice in the hyperplane
$H \subset \mathbb{R}^{r_1+r_2}$ defined by (\ref{eqn.Hyper}).  
Therefore we can use the coordinate system
\begin{align*}
(y_1,\ldots,y_r) \mapsto y_1 \varphi(\epsilon_1) + \cdots + y_r \varphi(\epsilon_r)
\end{align*}
on $H$.
In these coordinates the image under $\phi$ of the group generated by $\epsilon_1,\ldots,\epsilon_r$ becomes the integer lattice $\mathbb{Z}^r$.
The path ${\bm x}(t)$ defined in (\ref{EQN:X_OF_T}) becomes
\begin{align}\label{DEF_VECTOR_W}
{\bm x}(t) =  -{\bm w} \log t
\end{align}
for some suitable non-zero vector ${\bm w} = (w_1,\ldots,w_r)$.
We can then use
\begin{align*}
u_1(t) = u(t)^{-1}=\epsilon_1^{\lfloor w_1 \log t\rfloor} \cdots \epsilon_r^{\lfloor w_r \log t\rfloor}
\end{align*}
as the unit in Part (1) of Theorem A'.

Let $U(t)$ denote the matrix representing multiplication by $u_1(t)$ in the basis $1,\omega_1,\ldots,\omega_d$.  We have
\begin{align*}
U(t) = E_1^{\lfloor w_1 \log t\rfloor} \cdots E_r^{\lfloor w_r \log t\rfloor},
\end{align*}
where $E_1,\ldots,E_r$ are the matrices representing multiplication by the units $\epsilon_1,\ldots,\epsilon_r$.

Let us approximate $U(t)$ by a continuous version.
According to Lemmas~\ref{LEM:COMMUTING_MATRICES} and \ref{LEM:COMMUTING_LOGS} we can choose logarithms $L_1,\ldots,L_r$ of the matrices $E_1,\ldots,E_r$ in a way that they all commute.  Let
\begin{align}\label{EQN:DEF_U_TILDE}
\tilde{U}(t) = {\rm e}^{w_1 (\log t) L_1  + \cdots + w_r (\log t) L_r} = {\rm e}^{(\log t) L},
\end{align}
where 
\begin{align*}
L := w_1 L_1  + \cdots +w_r L_r.
\end{align*}
We remark that, since $L_1,\ldots , L_r$ commute, we also have that
\begin{align}\label{EQN:DEF_U_TILDE_V2}
\tilde{U}(t) = E_1^{w_1 \log t} \cdots E_r^{w_r \log t},
\end{align}
since the definition of the real power of a matrix gives $E_j^{w_1 \log t} := {\rm e}^{w_1 \log t L_j}$ for $1 \leq j \leq r$.  
Note that making a different choice of matrix logarithm can lead to a different value of the real power of a matrix, just like for the real power of a real number.  However, we have fixed our choices of logarithms once and for all when we defined (\ref{EQN:DEF_U_TILDE}).

For any $t \ge 1$, again using the commutativity of $L_1,\ldots , L_r$, we have that
\begin{align}\label{EQN:FACTORIZATION}
U(t) = A(t) \tilde{U}(t) 
\end{align}
where 
\begin{align}\label{EQN_A_OF_T}
A(t) = {\rm e}^{-\{w_1 \log t\} L_1   \cdots -\{w_r \log t\} L_r}.
\end{align}
We think of $A(t)$ as the ``multiplicative error'' between $U(t)$ and our continuous approximation $\tilde{U}(t)$.  It ranges
over a compact subset of the space of invertible $(d+1) \times (d+1)$ matrices.


\begin{remark}
The product $A(t) \tilde{U}(t)$ is a real matrix, because
$U(t)$ is.  However, the matrices $A(t)$ an $\tilde{U}(t)$ are not (necessarily) real because the logarithms $L_1,\ldots,L_r$
are not necessarily real.
\end{remark}

To prove Theorem \ref{THM:QUASIPERIODICITY_V2} we must estimate
\begin{align*}
\frac{{\bm n}(u_1(t))}{t} = \frac{1}{t} U(t) \ {\bm e}_1 = A(t) \, \left(\frac{1}{t} \tilde{U}(t) \ {\bm e}_1\right),
\end{align*}
where ${\bm e}_1 = (1,0,\ldots,0)^T$.  
To do this we write
\begin{align*}
\frac{1}{t} \tilde{U}(t) = {\rm e}^{-(\log t) {\rm I}} {\rm e}^{(\log t) L} =  {\rm e}^{(\log t) (L - {\rm I})},
\end{align*}
where ${\rm I}$ denotes the $(d+1) \times (d+1)$ identity matrix.

\begin{lemma}\label{LEM:EIGENVALUES}
The real part of every eigenvalue of $L-{\rm I}$ is non-positive and there exist eigenvalues whose real part is $0$.
In the Jordan canonical form for $L-{\rm I}$, each of the purely imaginary eigenvalues corresponds to 
a $1 \times 1$ (i.e.\ trivial) Jordan block.\end{lemma}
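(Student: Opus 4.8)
The plan is to diagonalize $L$ over $\CC$, to identify the eigenvalues of $L$ in terms of the conjugates of the fundamental units $\epsilon_1,\ldots,\epsilon_r$, and then to pin down their real parts using the relation that defines the vector ${\bm w}$. First I would observe that $L$ is diagonalizable. Each matrix $E_j$ (multiplication by $\epsilon_j$ in the basis $1,\omega_1,\ldots,\omega_d$) is diagonalizable over $\CC$ with eigenvalues exactly the $d+1$ conjugates $\sigma(\epsilon_j)$ of $\epsilon_j$, since its minimal polynomial is the (separable, as $\mathrm{char}\,\QQ=0$) minimal polynomial of $\epsilon_j$ over $\QQ$, or equivalently since the isomorphism $\Phi\otimes_\QQ\CC\cong\CC^{d+1}$ simultaneously diagonalizes multiplication by every element of $\Phi$. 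Because the exponential of a non-trivial Jordan block is again a non-trivial Jordan block, $e^{L_j}=E_j$ being diagonalizable forces each logarithm $L_j$ supplied by Lemma~\ref{LEM:COMMUTING_LOGS} to be diagonalizable. These logarithms commute, hence are simultaneously diagonalizable, so $L=w_1L_1+\cdots+w_rL_r$, and with it $L-\mathrm I$, is diagonalizable; in particular every Jordan block of $L-\mathrm I$ is $1\times1$, which already yields the second assertion of the lemma.

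Next I would compute the real parts of the eigenvalues of $L$. Since $E_1,\ldots,E_r$ commute and are semisimple, $\CC^{d+1}$ decomposes as the direct sum of their joint eigenspaces $V_\chi$, indexed by the characters $\chi$ of the commutative algebra they generate. Grouping the $d+1$ embeddings $\sigma$ of $\Phi$ by the tuple $(\sigma(\epsilon_1),\ldots,\sigma(\epsilon_r))$ that they realize, the $V_\chi$ correspond to these groups, with $\dim V_\chi$ equal to the size of the corresponding group --- this follows by intersecting over $j$ the eigenspaces of $E_j$ and using that the multiplicity of $\nu$ as an eigenvalue of $E_j$ is $\#\{\sigma:\sigma(\epsilon_j)=\nu\}$. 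Each $L_j$ commutes with every $E_i$ and so preserves each $V_\chi$, and on $V_\chi$ we have $e^{L_j|_{V_\chi}}=E_j|_{V_\chi}=\chi(E_j)\,\mathrm I$; hence every eigenvalue of $L_j|_{V_\chi}$ has real part $\log|\chi(E_j)|$, and therefore every eigenvalue of $L|_{V_\chi}$ has real part $\sum_{j=1}^{r}w_j\log|\chi(E_j)|$. Running over all $\chi$ and all embeddings realizing each $\chi$, the multiset of real parts of the eigenvalues of $L$ is precisely $\bigl\{\sum_{j=1}^{r}w_j\log|\sigma(\epsilon_j)|\,:\,\sigma\text{ an embedding of }\Phi\bigr\}$.

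Finally I would evaluate this multiset using the definition of ${\bm w}$. By \eqref{EQN:X_OF_T} and \eqref{DEF_VECTOR_W}, expressing ${\bm x}(t)=(d\log t,-\log t,\ldots,-\log t)$ in the coordinate system $\varphi(\epsilon_1),\ldots,\varphi(\epsilon_r)$ on the hyperplane \eqref{eqn.Hyper} gives $\sum_{j=1}^{r}w_j\varphi(\epsilon_j)=(-d,1,\ldots,1)\in\RR^{r_1+r_2}$, i.e.\ $\sum_j w_j\log|\sigma_1(\epsilon_j)|=-d$ and $\sum_j w_j\log|\sigma_i(\epsilon_j)|=1$ for $2\le i\le r_1+r_2$. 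Since the two embeddings in a complex conjugate pair have the same absolute value, the multiset above consists of the single value $-d$ together with $(r_1-1)+2r_2=d$ copies of $1$. Hence $L-\mathrm I$ has one eigenvalue of real part $-d-1<0$ and $d\ge1$ eigenvalues of real part $0$, which is the first assertion.

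The one point that needs care is the multiplicity bookkeeping in the middle step: distinct embeddings of $\Phi$ may agree on the finite-index subgroup $\langle\epsilon_1,\ldots,\epsilon_r\rangle$ of units (so a $V_\chi$ may have dimension greater than $1$), which is why I frame the eigenvalue computation as an identity of multisets coming from the joint-eigenspace decomposition rather than attaching a single eigenvector to each embedding. Everything else is routine linear algebra combined with the facts about the Minkowski embedding and the Dirichlet unit theorem recalled in Section~\ref{sec.Prelim}.
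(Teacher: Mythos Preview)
Your argument is correct and takes a genuinely different route from the paper. The paper's proof is dynamical: it first establishes $|U(t)\bm e_i|\asymp t$ for every $i$ (using the badly approximable property of $\bm\omega$ together with the bound \eqref{eqn.TransBd} on the smallest spacing and the commutation of $U(t)$ with the multiplication matrices $W_i$), then converts this into $|e^{(\log t)(L-\mathrm I)}\bm e_i|\asymp 1$, and finally reads off constraints on the Jordan form of $L-\mathrm I$ by computing $e^{(\log t)J}$ block by block. Your approach is purely algebraic: you observe that the $E_j$ are semisimple (via $\Phi\otimes_\QQ\CC\cong\CC^{d+1}$), deduce that each commuting logarithm $L_j$ is semisimple, and hence that $L$ and $L-\mathrm I$ are diagonalizable outright---which is strictly stronger than the paper's conclusion that only the purely imaginary eigenvalues have trivial Jordan blocks. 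You then identify the real parts of the eigenvalues exactly as $\{-d,1,\ldots,1\}$ by unwinding the definition of $\bm w$ in \eqref{DEF_VECTOR_W}, so that $L-\mathrm I$ has real parts $\{-d-1,0,\ldots,0\}$. Your care with the multiplicity bookkeeping (allowing distinct embeddings to agree on the chosen units, so that a joint eigenspace $V_\chi$ may have dimension $>1$) is warranted and handled correctly, since on each $V_\chi$ the restriction $L_j|_{V_\chi}$ is still semisimple and every eigenvalue has real part $\log|\chi(E_j)|$. In short: the paper's argument is more robust in that it uses only the asymptotic growth of $U(t)$ and would apply without explicitly computing $\bm w$; yours is sharper and more transparent, pinning down the spectrum exactly and yielding diagonalizability as a bonus.
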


\begin{proof}
If $a,b: [1,\infty) \rightarrow \mathbb{R}$ are functions, we will 
use the asymptotic notation $a(t) \asymp b(t)$ to denote that there exist
constants $C_1, C_2 > 0$ such that for every $t \geq 1$ we have 
\begin{align*}
C_1 \leq \frac{a(t)}{b(t)} \leq C_2.
\end{align*}

Recall from Section \ref{sec.Prelim} that, because ${\bm \omega}$ is badly approximable, the transference principle implies that
there is a $K' > 0$ such that each spacing satisfies 
\begin{align*}
\Delta_i(t) \leq \frac{K'}{t^d}.
\end{align*}
Focusing on the smallest spacing, Part (1) of Theorem A' implies that $\Delta_1(t) = s_j(t) u_1(t)$ for some $s_j(t)$ in the finite set $\S$.  
This gives
\begin{align*}
\Delta_1(t) = (1,\omega_1,\ldots,\omega_d) \ S_j(t) \ U(t) \ {\bm e}_1  \leq \frac{K'}{t^d},
\end{align*}
where $S_j(t)$ is the matrix representing multiplication by $s_j(t)$ in the basis $\{1,\omega_1,\ldots,\omega_d\}$.
Since the vector $(\omega_1,\ldots,\omega_d)$ is badly approximable, we find that there is a constant $C_1 > 0$ such that
\begin{align*}
|S_j(t) \ U(t) \ {\bm e}_1| > C_1 t
\end{align*}
for every $t \ge 1$.  

Also observe that, since
\begin{align*}
\Delta_1(t) = \{({\bm m}_1 - {\bm m}_2) \cdot {\bm \omega}\}
\end{align*}
for some ${\bm m}_1, {\bm m}_2 \in M(t)$, there is a constant $C_2 > 0$ such that for every $t \ge 1$ we have
\begin{align*}
|S_j(t) \ U(t) \ {\bm e}_1| < C_2 t.
\end{align*}

In summary, we have
\begin{align*}
|S_j(t) \ U(t) \ {\bm e}_1| \asymp t.
\end{align*}

Since the $S_j(t)$ range over a finite set of invertible matrices (corresponding to multiplication by elements of the finite set $S$) we
conclude that
\begin{align}\label{EQN:DESIRED_ASYMPTOTICS_E1}
|U(t) \ {\bm e}_1| \asymp t.
\end{align}
For each $1 \leq i \leq d$, let $W_i$ denote multiplication by $\omega_i$ in the basis $\{1,\omega_1,\ldots,\omega_d\}$.  We then have
\begin{align*}
|U(t) \ {\bm e}_{i+1}| = |U(t) \ W_i \ {\bm e}_{1}| = |W_i \ U(t) {\bm e}_{1}| \asymp t.
\end{align*}
The last equality holds because of Lemma \ref{LEM:COMMUTING_MATRICES}, and the last assertion 
follows from (\ref{EQN:DESIRED_ASYMPTOTICS_E1}) because $W_i$ is invertible.  

Finally, since $U(t) = A(t) \tilde{U}(t)$ with $A(t)$ given by (\ref{EQN_A_OF_T}) and hence varying over a compact set of invertible matrices,
we have 
\begin{align*}
|\tilde{U}(t) \ {\bm e}_i| \asymp t.
\end{align*}
for each $1 \leq i \leq d+1$.  In other words, this gives that for each $1 \leq i \leq d+1$ we have
\begin{align}\label{ESTIMATE_1}
\left|{\rm e}^{\log t (L - {\rm I})}  {\bm e}_i\right| = \left|\frac{1}{t} \tilde{U}(t) {\bm e}_i\right| \asymp 1.
\end{align}

Now we will use the Jordan decomposition $L - {\rm I} = P J P^{-1}$, where $P$ is an invertible matrix and $J$ is in Jordan form.
It follows from the power series definition of the matrix exponential that
\begin{align*}
{\rm e}^{(\log t) (L - {\rm I})} = P {\rm e}^{(\log t) J} P^{-1}.
\end{align*}
Since $P$ is invertible, (\ref{ESTIMATE_1}) implies for each $1 \leq i \leq d+1$ that we have
\begin{align}\label{ESTIMATE_2}
\left|{\rm e}^{(\log t) J} P^{-1}  {\bm e}_i \right| \asymp 1.
\end{align}

Let us consider the upper left Jordan block, which we suppose is $k \times k$:
\begin{align*}
J_1 = \left[\begin{array}{ccccccc} 
\lambda	 & 1	 & 0	 & 0	& \cdots	 & 0	& 0 	\\ 
 0	 & \lambda & 1	 & 0	& \cdots 	 & 0	& 0     \\ 
\vdots   &  \vdots & \vdots & \vdots & & \vdots & \vdots\\
0       & 0      & 0     & 0    & \cdots         & \lambda & 1 \\
0  	& 0 	 & 0 	 & 0 	& \cdots 	 & 0 &    \lambda 
\end{array} \right] = \lambda {\rm I} + N,
\end{align*}
where $N$ is the nilpotent matrix whose only non-zero entries are ones on the ``super-diagonal'' of $J_1$.
It satisfies $N^k = 0$.
Since ${\rm I}$ commutes with every matrix we have
\begin{align*}
{\rm e}^{(\log t) J_1} = {\rm e}^{(\log t \lambda) {\rm I}} \ {\rm e}^{(\log t) N}  = {\rm diag}(e^{\log t \lambda},\ldots,e^{\log t \lambda})
\ 
 p((\log t) N)
\end{align*}
where 
\begin{align*}
p(x) = 1 + x + \frac{x^2}{2!} + \cdots + \frac{x^{k-1}}{(k-1)!}.
\end{align*}
From this, one can check that the $(1,k)$ entry of ${\rm e}^{(\log t) J_1}$ equals
\begin{align*}
\frac{1}{(k-1)!}(\log t)^{k-1} {\rm e}^{\log t \lambda}
\end{align*}
 and that all other entries
have moduli that are smaller, asymptotically as $t\rar\infty$, by at least a factor of $\log t$.  

Because ${\bm e}_1,\ldots, {\bm e}_{d+1}$ form a basis for
$\mathbb{R}^{d+1}$ there exists $1 \leq j \leq d+1$ such that the $k$-th entry
of $P^{-1} {\bm e}_j$ is non-zero.  It follows from the previous paragraph that 
\begin{align*}
|{\bm e}_1\cdot({\rm e}^{(\log t) J} P^{-1}  {\bm e}_j)| \asymp (\log t)^{k-1} e^{\log t \ {\rm Re}(\lambda)}.
\end{align*}
Combined with (\ref{ESTIMATE_2}) this implies that 
${\rm Re}(\lambda) \leq 0$ and that if ${\rm Re}(\lambda) = 0$ then $k=1$.

By permuting the Jordan blocks of $J$ we find that the same holds for every other Jordan block.  Finally, existence of at least one purely imaginary eigenvalue is needed for the lower bound implied by (\ref{ESTIMATE_2}) to hold.
\end{proof}

\begin{proof}[Proof of Theorem \ref{THM:QUASIPERIODICITY_V2}]
By Lemma \ref{LEM:EIGENVALUES}, we can write $L - {\rm I} = P J P^{-1}$ with $J$ being block-diagonal of the form
\begin{align*}
J = {\rm diag}(2\pi i \theta_1,\ldots , 2\pi i \theta_k,J_{k+1},\ldots,J_\ell)
\end{align*}
with $\theta_1,\ldots,\theta_k \in \mathbb{R}$ for some $1 \leq k \leq d+1$ and 
with the blocks $J_{k+1},\ldots ,J_\ell$ all corresponding to eigenvalues with real parts less than some $\gamma < 0$.
Note that at this step we have selected the angles ${\bm \theta} = (\theta_1,\ldots,\theta_k)$ which are asserted to exist in the statement of the theorem.

Consider the $(d+1) \times (d+1)$ diagonal matrix:
\begin{align*}
\hat{J} := {\rm diag}(2\pi i \theta_1,\ldots ,2\pi i \theta_k,0,\ldots,0).
\end{align*}
Let $e^{\gamma} < \alpha < 1$.  Then, 
it follows from the calculations of exponentials of Jordan blocks in the end of the proof of Lemma \ref{LEM:EIGENVALUES} that for any $t \geq 1$ we have
\begin{align}\label{ESTIMATE}
{\rm e}^{(\log t) J} - {\rm e}^{(\log t) \hat{J}} = \mathcal{O}(\alpha^{\log t}).
\end{align}
Here we mean that the modulus of each corresponding component of the difference is $\mathcal{O}(\alpha^{\log t}).$

Define $g_3: \mathbb{T}^k \times [0,1]^r \rightarrow \mathbb{R}^{d+1}$ by
\begin{align}\label{eqn.g3def}
g_3({\bm \psi},{\bm x}) :=  
{\rm Re}\left({\rm e}^{-x_1 L_1   \cdots -x_r L_r}  P \ {\rm diag}({\rm e}^{2\pi i \psi_1},\ldots,{\rm e}^{2\pi i\psi_k},0,\ldots,0) \  P^{-1}  {\bm e}_1\right).
\end{align}
Clarifications:
\begin{enumerate}
\item Here, as usual, we denote the angles ${\bm \psi} \in \mathbb{T}^k$ by their lifts in $\mathbb{R}^k$.  However the formula
clearly only depends on the angles themselves.
\item The ${\rm Re}$ denotes that we are taking the real part of each component of the resulting vector.
\item This function is differentiable, hence Lipschitz.
\end{enumerate}

If we define our rates by ${\bm \beta} = {\bm w}$  (see \eqref{DEF_VECTOR_W}) then we have 
\begin{align*}
g_3({\bm \theta} \log t, \{\bm \beta \log t\}) = {\rm Re}\left(A(t) P {\rm e}^{\log t \hat{J}} P^{-1} {\bm e}_1\right).
\end{align*}
Finally, observe that
\begin{align*}
\frac{{\bm n}(u_1(t))}{t} = A(t) \, \left(\frac{1}{t} \tilde{U}(t) \ {\bm e}_1\right) = A(t) {\rm e}^{\log t (L - {\rm I})}{\bm e}_1 = A(t) P {\rm e}^{\log t J} P^{-1} {\bm e}_1.
\end{align*}
Since $A(t)$ ranges over a compact set of matrices, the result follows from (\ref{ESTIMATE}) and the fact that
$\frac{{\bm n}(u_1(t))}{t}$ is real.

\end{proof}


\section{Quasiperiodicity of finite words and Proof of Theorem B'}
\label{SEC:QP_ON_WORDS_AND_PROOF_THM_BPRIME}

We will show that a relatively simple modification of the proof of Theorem A' yields the
following stronger statement.  

\begin{THMC}
Suppose that $1,\omega_1,\ldots
,\omega_d\in\RR$ form a $\QQ$-basis for an algebraic number field $\Phi$
and let $r \geq 1$ denote the rank of $\ZZ_\Phi^\times$.
Let $\S$ be the finite set given by Part (1) of Theorem A'.

For each choice of $1 \leq j_0,\ldots,j_l  \leq J$ let $p_{j_0,\ldots,j_l}(t)$ denote the proportion of the points
from $\{y_i(t) \, : \, 1 \leq i \leq \ell(t) - l-1\}$ such that 
\begin{align}\label{EQN_WORD_CONDITION}
\delta_{i}(t) &= y_{i+1}(t) - y_{i}(t) = s_{j_0} u(t)^{-1}, \nonumber \\
\delta_{i+1}(t) &= y_{i+2}(t) - y_{i+1}(t) = s_{j_1} u(t)^{-1}, \nonumber  \\
& \vdots  \\
\delta_{i+l-1}(t) &= y_{i+l}(t) - y_{i+l-1}(t) = s_{j_{l-1}}  u(t)^{-1}, \quad \mbox{and} \nonumber \\
\delta_{i+l}(t) &= y_{i+l+1}(t) - y_{i+l}(t) = s_{j_l} u(t)^{-1}. \nonumber
\end{align}
Equivalently, $p_{j_0,\ldots,j_l}(t)$ is the proportion of the points $y_i(t)$ such
that the sequence of $l+1$ consecutive rescaled spacings starting at $y_i(t)$ forms the word $s_{j_0} s_{j_2} \ldots s_{j_l}$.

Then,
there is an integer $0 \leq k \leq d+1$, a Lipschitz continuous function
\begin{align*}
h: \mathbb{T}^k \times [0,1]^r \rightarrow P := \left\{(p_{1,\ldots,1},\ldots,p_{J,\ldots,J}) \, : \sum p_{j_0,\ldots,j_l} = 1, p_{j_0,\ldots,j_l} \geq 0\right\},
\end{align*}
angles ${\bm \theta} = (\theta_1,\ldots,\theta_k) \in \mathbb{T}^k$, rates ${\bm \beta} = (\beta_1,\ldots,\beta_r) \in \mathbb{R}^r$, and $0 < \alpha < 1$ such that
\begin{align*}
(p_{1,\ldots,1,1}(t),p_{1,\ldots,1,2}(t),\ldots,p_{J,\ldots,J,J}(t)) = h\left({\bm \theta} \log t, \{ {\bm \beta} \log t \} \right) +
\mathcal{O}(\alpha^{\log t}).
\end{align*}
\end{THMC}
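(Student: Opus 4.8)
The plan is to deduce Theorem~C from Theorem~\ref{THM:QUASIPERIODICITY_V2} by rerunning the argument of Section~\ref{sec.QP}, with the partition $\{M_j(t)\}$ of $M(t)$ from Section~\ref{subsec.Partitions} replaced by the finer partition $\{M_{j_0,\ldots,j_l}(t)\}$ indexed by words of length $l+1$ in the alphabet $\S$; the integer $k$, the angles $\bm\theta$, the rates $\bm\beta$, and the constant $0<\alpha<1$ will be exactly those produced in the proof of Part~(2) of Theorem~A', and only the final Lipschitz function will change. To set up, given $1\le j_0,\ldots,j_l\le J$ put $q_0:=0$ and $q_m:=s_{j_0}+\cdots+s_{j_{m-1}}$ for $1\le m\le l+1$; since $\S\subset\mathbb{M}_\Phi$ and $\mathbb{M}_\Phi$ is a $\ZZ_\Phi$-module, $q_m\in\mathbb{M}_\Phi$ and $q_mu_1(t)\in\mathbb{M}_\Phi$. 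Let $M_{j_0,\ldots,j_l}(t)$ be the set of $\bm m\in M(t)$ whose $l+1$ consecutive rescaled spacings starting at $\{\bm m\cdot\bm\omega\}$ spell $s_{j_0}s_{j_1}\cdots s_{j_l}$. An induction on $m$ — using, once the first $m$ spacings have been matched, that $y_{i+m}(t)=\{(\bm m+\bm m(q_mu_1(t)))\cdot\bm\omega\}$, together with the definition of $M_{j_m}(t)$ applied at that point — should yield
\begin{align*}
M_{j_0,\ldots,j_l}(t)=\bigcap_{m=0}^{l}\big(M_{j_m}(t)-\bm m(q_mu_1(t))\big),
\end{align*}
up to $\mc O(1)$ boundary points near $y=1$, which do not affect the proportions asymptotically.

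Inserting the formula of Proposition~\ref{PROP:BLEHER_5_1} for each factor $M_{j_m}(t)$ and using additivity of $\bm m(\cdot)$ — so that $\bm v_{j_m}(t)+\bm m(q_mu_1(t))=\bm m(q_{m+1}u_1(t))$ and $\bm v_i(t)+\bm m(q_mu_1(t))=\bm m((q_m+s_i)u_1(t))$ — this should collapse to
\begin{align*}
M_{j_0,\ldots,j_l}(t)&=\left[\bigcap_{m=0}^{l+1}\big(M(t)-\bm m(q_mu_1(t))\big)\right]\\
&\qquad \setminus\ \bigcup_{m=0}^{l}\ \bigcup_{\substack{1\le i\le J\\ s_i<s_{j_m}}}\big(M(t)-\bm m((q_m+s_i)u_1(t))\big).
\end{align*}
Hence $M_{j_0,\ldots,j_l}(t)$ is one fixed finite Boolean combination of translates $M(t)-\bm m(\beta u_1(t))$ with $\beta$ ranging over the word-independent finite set $B:=\{\,s_{i_1}+\cdots+s_{i_{m'}}:\ 0\le m'\le l+1,\ 1\le i_\nu\le J\,\}\subset\mathbb{M}_\Phi$. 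Getting the indices right in this induction and in the expansion is the step I expect to require the most care; everything after it is a routine transcription of Sections~\ref{subsec.Partitions}--\ref{subsec.dynamics}.

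With the finitary description in hand, the region estimates of Section~\ref{subsec.Partitions} carry over verbatim. For $\bm v=(\bm v_\beta)_{\beta\in B}$ let $P_{j_0,\ldots,j_l}(\bm v)\subseteq R$ be the same Boolean combination with each $M(t)$ replaced by $R$ and each $\bm m(\beta u_1(t))$ replaced by $\bm v_\beta$. As $R$ is bounded and convex, every such region is a finite Boolean combination of translates of $R$, so its boundary has Lebesgue measure zero and the lattice-point count of its dilates still has error $\mc O(t^{d-1})$; the proofs of Propositions~\ref{PROP:BLEHER_5_2} and~\ref{PROP:BLEHER_5_3} then give, with $\bm v(t):=\big(\bm m(\beta u_1(t))/t\big)_{\beta\in B}$, that
\begin{align*}
p_{j_0,\ldots,j_l}(t)-\frac{{\rm vol}\big(P_{j_0,\ldots,j_l}(\bm v(t))\big)}{{\rm vol}(R)}=\mc O\!\left(\frac{1}{t}\right),
\end{align*}
and that the map $g_1:\bm v\mapsto\big({\rm vol}(P_{j_0,\ldots,j_l}(\bm v))/{\rm vol}(R)\big)_{(j_0,\ldots,j_l)}$ is Lipschitz. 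To make $g_1$ land in the simplex $P$ I would simply define its $(J,\ldots,J)$-coordinate to be $1$ minus the sum of the others, which is consistent with the estimate above since $\sum_{(j_0,\ldots,j_l)}p_{j_0,\ldots,j_l}(t)=1$ for every $t$.

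Finally I would glue the pieces together as in Section~\ref{subsec.glue}. For each $\beta\in B$, multiplication by $\beta$ is linear on $\Phi$, so $\bm n(\beta u_1(t))=S_\beta\,\bm n(u_1(t))$ for a fixed matrix $S_\beta$; projecting onto the last $d$ coordinates and dividing by $t$ packages these into a single linear map $g_2:\RR^{d+1}\to(\RR^{d})^{|B|}$ with $\bm v(t)=g_2\big(\bm n(u_1(t))/t\big)$. Taking $g_3$ together with $k$, $\bm\theta$, $\bm\beta$, and $\alpha$ from Theorem~\ref{THM:QUASIPERIODICITY_V2}, and setting $h:=g_1\circ g_2\circ g_3$, the Lipschitz estimates should chain exactly as in the proof of Part~(2) of Theorem~A' — the $\mc O(t^{-1})$ term being absorbed into $\mc O(\alpha^{\log t})$ since $t^{-1}=(1/{\rm e})^{\log t}$ — to yield the asserted quasiperiodic asymptotic for $(p_{1,\ldots,1}(t),\ldots,p_{J,\ldots,J}(t))$. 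No dynamical input beyond Theorem~\ref{THM:QUASIPERIODICITY_V2} is needed, and Theorem~B' then follows as the case $l=1$, as announced in Remark~\ref{REMARK_ABOUT_THMC}.
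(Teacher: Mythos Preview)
Your proposal is correct and follows essentially the same route as the paper: the paper also derives the key formula
\[
M_{j_0 j_1 \ldots j_l}(t) = M_{j_0}(t) \cap \big(M_{j_1}(t) - {\bm v}_{j_0}(t)\big) \cap \cdots \cap \big(M_{j_l}(t) - {\bm v}_{j_0}(t)-\cdots-{\bm v}_{j_{l-1}}(t)\big),
\]
which is exactly your intersection $\bigcap_{m=0}^{l}\big(M_{j_m}(t)-\bm m(q_mu_1(t))\big)$ rewritten, and then declares that the analogues of Propositions~\ref{PROP:BLEHER_5_2} and~\ref{PROP:BLEHER_5_3} go through, with the remainder of the proof of Theorem~A' unchanged. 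The only cosmetic difference is that the paper keeps the original $J$-tuple ${\bm v}=({\bm v}_1,\ldots,{\bm v}_J)$ as the argument of the region map (since the shifts ${\bm v}_{j_0}+\cdots+{\bm v}_{j_{m-1}}$ are computable from it), whereas you expand further via Proposition~\ref{PROP:BLEHER_5_1} and index by the larger finite set $B$; this is harmless redundancy and does not change the argument.
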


\begin{proof}
The only changes to the proof of Theorem A' that are necessary are adaptations to Section~\ref{subsec.Partitions}
about the partitions.

For any length $l+1$ word $j_0 j_1 \ldots j_l \in J^{l+1}$ let $Y_{j_0 j_1 \ldots j_l}(t)$ be the subset of those
$$\{y_i(t) \, : \, 1 \leq i \leq \ell(t) - l-1\}$$ such that (\ref{EQN_WORD_CONDITION}) holds.
Let $M_{j_0 j_1 \ldots j_l}(t)$ be the set of vectors ${\bm m} \in M(t)$ such that $\{{\bm m} \cdot {\bm \omega}\} \in Y_{j_0 j_1 \ldots j_l}(t)$.  These sets form a partition of $M(t)$ up to the $l$ points corresponding
to $\{y_{\ell(t)-l},\ldots,y_{\ell(t)}\}$.
As in the proof of Theorem A' we have for any word $j_0 j_1 \ldots j_l$ that
\begin{align*}
p_{j_0 j_1 \ldots j_l}(t) = \frac{|M_{j_0 j_1 \ldots j_l}(t)|}{|M(t)| - l}.
\end{align*}

Let us first consider the case of words of length two.
As in Proposition \ref{PROP:BLEHER_5_1}, for any $1 \leq j \leq J$ let
\begin{align*}
{\bm v}_j(t) = {\bm m}(s_j u_1(t)).
\end{align*}
Then, we claim that
\begin{align*}
M_{j_0 j_1}(t) = M_{j_0}(t) \cap \left(M_{j_1}(t) - {\bm v}_{j_0}(t)\right).
\end{align*}
To see this, notice that if ${\bm m}(y_i(t)) = {\bm v}$ then to have
$\delta_i(t) = s_{j_0} u(t)^{-1}$ we need to have ${\bm v} \in M_{j_0}(t)$ and in order
to have $\delta_{i+1}(t) = s_{j_1} u(t)^{-1}$ we need to have  
${\bm v} + {\bm v}_{j_0}(t) \in M_{j_1}(t)$.  Here, we are using that since $\delta_i(t) = s_{j_0} u(t)^{-1}$
we have ${\bm m}(y_{i+1}(t)) = {\bm v} +  {\bm v}_{j_0}(t)$.

A simple induction yields the following formula in the general case:
\begin{align*}
M_{j_0 j_1 \ldots j_l}(t) = M_{j_0}(t) \cap \left(M_{j_1}(t) - {\bm v}_{j_0}(t)\right) \cap \cdots \cap \left(M_{j_l}(t) - {\bm v}_{j_0}(t)-{\bm v}_{j_1}(t)-\cdots-{\bm v}_{j_{l-1}}(t)\right).
\end{align*}
In each of these formulae the sets $M_j(t)$ are defined as in Section \ref{subsec.Partitions} and given by (\ref{EQN:PARTITIONS}).

Denote the power set of our region $R$ by $\mathcal{P}(R)$ and let
\begin{align*}
\mathscr{P} : (\mathbb{R}^d)^J \rightarrow \mathcal{P}(R)^{J^l}
\end{align*}
be the mapping which sends ${\bm v} = ({\bm
v_1},\ldots,{\bm v_J}) \in (\mathbb{R}^d)^J$ to $(\mathscr{P}_{11\ldots1}({\bm
v}),\ldots,\mathscr{P}_{JJ\ldots J}({\bm v})) \in \mathcal{P}(R)^{J^l}$, where,
\begin{align*}
\mathscr{P}_{j_0 j_1 \ldots j_l}({\bm v}) = P_{j_0}({\bm v}) \cap \left(P_{j_1}({\bm v}) - {\bm v}_{j_0}\right) \cap \cdots \cap \left(P_{j_l}({\bm v}) - {\bm v}_{j_0}-{\bm v}_{j_1}-\cdots-{\bm v}_{j_{l-1}}\right).
\end{align*}
Here, 
\begin{align*}
P_j({\bm v}) = [R \cap (R - {\bm v_j})] \setminus \bigcup_{i=1}^{j-1}(R - {\bm v_i}),
\end{align*}
is the same formula as from Section \ref{subsec.Partitions}.

We then claim that the appropriately generalized versions of Propositions \ref{PROP:BLEHER_5_2} and \ref{PROP:BLEHER_5_3}, expressed in terms of the definitions and formulae above for $M_{j_0 j_1 \ldots j_l}(t)$ and $\mathscr{P}_{j_0 j_1 \ldots j_l}({\bm v})$, also hold in this context.  We leave this to the reader to check.  The remainder  of the proof holds exactly as the proof of Theorem A'
\end{proof}

\noindent
Theorem B' is now an immediate consequence of Theorem C in the case of words
of length two.
\qed (Theorem B')


\section{Worked example: a totally real cubic field}\label{sec.example}

In this section we will work out the details of our quasiperiodicity theorem (Part (2) of Theorem A') in a particular example which highlights many of the important steps in its proof. For our example we take a totally real cubic field of smallest discriminant, for which we can use the table in \cite{CusiScho1987} to identify a pair of generators for a finite index subgroup of the group of units in $\ZZ_\Phi^\times$. We also take the region $R$ to be the half open unit square, so that $R(t)=[0,t)^2$ and $M(t)=\ZZ^2\cap [0,t)^2$.

Let $\omega_1$ be the smallest real root of the cubic polynomial $f(x)=x^3-7x^2+14x-7$, and let $\omega_2=\omega_1^2$. Then $1,\omega_1,$ and $\omega_2$ form a $\QQ$-basis for the algebraic number field $\Phi=\QQ(\omega_1)$ of degree $d+1=3$ over $\QQ$, and the ring of integers of $\Phi$ is $\ZZ_\Phi=\ZZ[\omega_1]$ (see \cite{CusiScho1987}).

All three of the roots of $f(x)$ are real and positive, so let us list them as $0<\alpha_1<\alpha_2<\alpha_3$ (note that $\omega_1=\alpha_1$). For each $1\le i\le 3$ let $\sigma_i$ be the embedding of $\Phi$ into $\RR$ which maps $\omega_1$ to $\alpha_i$. It is clear that $r_1=3$ and $2r_2=0$, so the rank of the group of units is $r_1+r_2-1=2$. Now we let $\epsilon_1,\epsilon_2\in\Phi$ be defined by
\begin{equation}
\epsilon_1=2-4\alpha_1+\alpha_1^2\quad\text{and}\quad \epsilon_2=-5+5\alpha_1-\alpha_1^2.
\end{equation}
It follows from \cite{CusiScho1987} that $\epsilon_1$ and $\epsilon_2$ generate a finite index subgroup of the group of units of $\ZZ_\Phi$.

Next, following \eqref{DEF_VECTOR_W}, we let $\bm{\beta}=(\beta_1,\beta_2)$ be determined by
\begin{equation*}
\begin{pmatrix}
\beta_1\\ \beta_2
\end{pmatrix}
= \begin{pmatrix}
\log |\epsilon_1| & \log |\epsilon_2|\\ \log |\sigma_2(\epsilon_1)| & \log |\sigma_2(\epsilon_2)|
\end{pmatrix}^{-1}
\begin{pmatrix}
-2\\1
\end{pmatrix},
\end{equation*}
so that 
\begin{equation*}
\bm{\beta}\approx (1.96080, -0.70061),
\end{equation*}
and for $t\ge 1$ we set
\begin{equation*}
u_1(t)=\epsilon_1^{\lfloor\beta_1\log t\rfloor}\epsilon_2^{\lfloor\beta_2\log t\rfloor}.
\end{equation*}
Note for later that the fact the $\epsilon_1$ and $\epsilon_2$ are units also gives that
\begin{equation*}
\beta_1\log |\sigma_3(\epsilon_1)|+\beta_2\log|\sigma_3(\epsilon_2)|=-\beta_1\log |\epsilon_1\sigma_2(\epsilon_1)|-\beta_2\log|\epsilon_2\sigma_2(\epsilon_2)|=1.
\end{equation*}

The first claim of Theorem A' is that there is a finite set $S\subseteq\Phi$ with the property that, for any $t\ge 1$, every spacing $\Delta_i(t)$ has the form $u_1(t)s$ for some $s\in \S$. In order to identify such a set, we must first derive an upper bound for $\Delta_i(t)$. As indicated by our proof above, we will do this using Cassels's transference principle. First notice that if $(m_1,m_2)\in \ZZ^2\setminus\{\bm{0}\}$ satisfies $|m_1|,|m_2|\le t$, and if $m_0\in\ZZ$ is chosen so that
\begin{equation*}
\|\bm{m}\cdot\bm{\omega}\|=m_0+m_1\omega_1+m_2\omega_2,
\end{equation*}
then we have that
\begin{equation*}
|m_0|\le|m_1\omega_1+m_2\omega_2|+1/2\le (1/2+\alpha_1+\alpha_1^2)t.
\end{equation*}
This in turn gives for $j=2$ and 3 that
\begin{equation*}
|m_0+m_1\alpha_j+m_2\alpha_j^2|\le (1/2+\alpha_1+\alpha_1^2+\alpha_j+\alpha_j^2)t,
\end{equation*}
and it follows that
\begin{align*}
\|\bm{m}\cdot\bm{\omega}\|=\frac{\mathrm{Norm}(m_0+m_1\alpha_1+m_2\alpha_1^2)}{|m_0+m_1\alpha_2+m_2\alpha_2^2|\cdot |m_0+m_1\alpha_3+m_2\alpha_3^2|}\ge\frac{1}{Kt^2},
\end{align*}
with 
\begin{equation*}
K=(1/2+\alpha_1+\alpha_1^2+\alpha_2+\alpha_2^2)(1/2+\alpha_1+\alpha_1^2+\alpha_3+\alpha_3^2).
\end{equation*}
The inhomogeneous transference principle \cite[Section V, Theorem VI]{Cass1957} then implies that, for any $t\ge 1$ and for any real number $\gamma$, there is an integer solution $(n_1,n_2)\in\ZZ^2$ to the inequality
\begin{equation*}
\|\bm{n}\cdot\bm{\omega}-\gamma\|\le \frac{\lfloor K\rfloor+1}{2Kt^2}
\end{equation*}
satisfying
\begin{equation*}
|n_1|,|n_2|\le \left(\frac{\lfloor K\rfloor+1}{2}\right)t.
\end{equation*}
Rescaling, we conclude that there is always an integer solution $(n_1,n_2)\in\ZZ^2$ to the inequality
\begin{equation*}
\|\bm{n}\cdot\bm{\omega}-\gamma\|\le \frac{(\lfloor K\rfloor+1)^3}{8Kt^2},
\end{equation*}
with $|n_1|,|n_2|\le t$. It follows that, for $t\ge 1$, every spacing $\Delta_i(t)$ must satisfy the inequality
\begin{equation}\label{eqn.ExSpacingBd}
\Delta_i(t)\le \frac{(\lfloor K\rfloor+1)^3}{4Kt^2}.
\end{equation}
Since every spacing $\Delta_i(t)$ must be of the form $\bm{m}\cdot\bm{\omega}$ for some $\bm{m}\in M(t)$,
from the above discussion we also have for $j=2$ and 3 that
\begin{equation*}
\sigma_j(\Delta_i(t))\le (1/2+\alpha_1+\alpha_1^2+\alpha_j+\alpha_j^2)t.
\end{equation*}
Next, with help from the math software Sage, we find that
\begin{align*}
|u_1(t)^{-1}|&=|\epsilon_1|^{-\lfloor\beta_1\log t\rfloor}|\epsilon_2|^{-\lfloor\beta_2\log t\rfloor}\\
&=t^2\exp\left(\{\beta_1\log t\}\log|\epsilon_1|+\{\beta_2\log t\}\log|\epsilon_2|\right)\\
&\le |\epsilon_2|t^2,
\end{align*}
that
\begin{align*}
|\sigma_2(u_1(t)^{-1})|&=|\sigma_2(\epsilon_1)|^{-\lfloor\beta_1\log t\rfloor}|\sigma_2(\epsilon_2)|^{-\lfloor\beta_2\log t\rfloor}\\
&=t^{-1}\exp\left(\{\beta_1\log t\}\log|\sigma_2(\epsilon_1)|+\{\beta_2\log t\}\log|\sigma_2(\epsilon_2)|\right)\\
&\le \frac{|\sigma_2(\epsilon_1)||\sigma_2(\epsilon_2)|}{t},
\end{align*}
and that
\begin{align*}
|\sigma_3(u_1(t)^{-1})|&=|\sigma_3(\epsilon_1)|^{-\lfloor\beta_1\log t\rfloor}|\sigma_3(\epsilon_2)|^{-\lfloor\beta_2\log t\rfloor}\\
&=t^{-1}\exp\left(\{\beta_1\log t\}\log|\sigma_3(\epsilon_1)|+\{\beta_2\log t\}\log|\sigma_3(\epsilon_2)|\right)\\
&\le \frac{|\sigma_3(\epsilon_1)|}{t}.
\end{align*}
This means that the Minkowski embedding $\sigma(u_1(t)^{-1}\Delta_i)$ of $u_1(t)^{-1}\Delta_i$ into $\RR^3$ is a point of the lattice $\Gamma=\sigma(\ZZ_\Phi)$ which lies in the box
\begin{equation*}
[-K_1,K_1]\times [-K_2,K_2]\times [-K_3,K_3],
\end{equation*}
with
\begin{align*}
K_1&= \frac{(\lfloor K\rfloor+1)^3|\epsilon_2|}{4K},\\ K_2&=(1/2+\alpha_1+\alpha_1^2+\alpha_2+\alpha_2^2)|\sigma_2(\epsilon_1)||\sigma_2(\epsilon_2)|,~\text{and}\\ K_3&=(1/2+\alpha_1+\alpha_1^2+\alpha_3+\alpha_3^2)|\sigma_3(\epsilon_1)|.
\end{align*}
Therefore, for our finite set $\S$ we may take the collection of all points of $\Gamma$ which lie in this box. Unfortunately, here there is a bit of a disappointment. The volume of the box defined above is approximately $11,034,177$, while a fundamental domain for $\Gamma$ has volume $7$. This means that the number of lattice points in the box is close to $10^6$. While it is not computationally infeasible to find and list all of these points, further computations of the areas of the regions from Proposition \ref{PROP:BLEHER_5_2} become unwieldy. They are also somewhat unenlightening, because most of the regions end up being empty- in all cases we have computed, which includes all $t\le 300$, there are no more than 10 distinct spacings. However, we can still continue further to explore the quasiperiodic behavior of the function $g_3$ from the statement of Theorem \ref{THM:QUASIPERIODICITY_V2}.

The linear transformations $E_1$ and $E_2$ of $\Gamma$ determined by multiplication by $\epsilon_1$ and $\epsilon_2$ in $\Phi$ (with respect to the basis $\sigma(1),\sigma(\omega_1),\sigma(\omega_2)$ of $\Gamma$) are given by
\begin{equation*}
E_1=\begin{pmatrix}
2&7&21\\ -4& -12& -35\\  1&   3&   9
\end{pmatrix}\quad\text{and}\quad E_2=\begin{pmatrix} -5 &  -7 & -14\\
 5&   9&   21\\ -1&  -2&  -5
\end{pmatrix}.
\end{equation*}
These matrices commute, and they are diagonalizable, therefore they are simultaneously diagonalizable. Explicitly, let
\begin{equation*}
\lambda_1=2-\alpha_1,\quad\lambda_2=2-4\alpha_1+\alpha_1^2,\quad\lambda_3=-5+5\alpha_1-\alpha_1^2,
\end{equation*}
let
\begin{equation*}
D_1=\mathrm{diag}(\lambda_1,\lambda_2,\lambda_3),\quad D_2=\mathrm{diag}(\lambda_2,\lambda_3,\lambda_1),
\end{equation*}
and let 
\begin{equation*}
Q=\begin{pmatrix}
1&1&1\\
-3+2\alpha_1-(3/7)\alpha_1^2&-\alpha_1+(1/7)\alpha_1^2&-1-\alpha_1+(2/7)\alpha_1^2\\
1-(5/7)\alpha_1+(1/7)\alpha_1^2&(1/7)\alpha_1&(4/7)\alpha_1-(1/7)\alpha_1^2
\end{pmatrix}.
\end{equation*}
Then we have for $i=1$ and 2 that
\begin{equation*}
E_i=QD_iQ^{-1}.
\end{equation*}
Noting that $\lambda_1>0$ while $\lambda_2,\lambda_3<0$, and choosing a branch of the logarithm which includes both the positive and negative real axes, a pair of commuting logarithms of $E_1$ and $E_2$ is given by
\begin{align*}
L_1&=Q \ \mathrm{diag}(\log \lambda_1, \ \log |\lambda_2|+i\pi, \ \log |\lambda_3|+i\pi) \ Q^{-1},~\text{and}\\
L_2&=Q \ \mathrm{diag}(\log|\lambda_2|+i\pi, \ \log |\lambda_3|+i\pi, \ \log \lambda_1) \ Q^{-1}.
\end{align*}
Now, following the proof of Theorem \ref{THM:QUASIPERIODICITY_V2}, we take $L=\beta_1L_1+\beta_2L_2$. In this case, again using Sage, we have that
\begin{equation*}
L-I=PJP^{-1}
\end{equation*}
with
\begin{equation*}
P\approx \begin{pmatrix} -0.52319 &-0.48157  &0.82671 \\
 0.83239 &  0.83647 &  -0.55556 \\
 -0.18274 & -0.26156 &  0.08893 \end{pmatrix}
\end{equation*}
and
\begin{equation*}
J\approx  \mathrm{diag}(6.16003~i , -2.20103~i, -3.00000 + 3.95900~i).
\end{equation*}
These numbers have been computed using (complex) double float precision, but for readability we have rounded them to five digits. This means that $k=2$ in the statement of Theorems A' and \ref{THM:QUASIPERIODICITY_V2}, and that
\begin{equation*}
\bm{\theta}\approx\left(\frac{6.16003}{2\pi},\frac{-2.20103}{2\pi}\right).
\end{equation*}
Finally, with $g_3$ defined as in \eqref{eqn.g3def}, we have that
\begin{equation*}
\frac{\bm{n}(u_1(t))}{t}=g_3(\bm{\theta}\log t,\{\bm{\beta}\log t\})+O(\alpha^{\log t}),
\end{equation*}                                    
for any $\alpha>e^{-3}$. Below is a table comparing the actual values of $\bm{n}(u_1(t))$ with the approximate values given by  $tg_3(\bm{\theta}\log t,\{\bm{\beta}\log t\}).$ The values of $t$ have been sampled along the sequence $\lfloor 10^{i/2}\rfloor$.

\begin{equation*}
\begin{array}{|c|c|c|c|}
\hline	
i & t=\lfloor 10^{i/2}\rfloor & \bm{n}(u_1(t)) & tg_3(\bm{\theta}\log t,\{\bm{\beta}\log t\}) \\ \hline	
1 & 3 & (-5,8,-2) & (-4.80194, 7.86690, -1.97869) \\
\hline	
2& 10 & (-3,4,0) & (-3.02177, 4.01463, -0.00234) \\
\hline	
3& 31 & (-41,68,-18) & (-40.99761, 67.99839, -17.99974) \\
\hline
4& 100 & (186,-308,81) & (186.00012, -308.00008, 81.00001) \\
\hline	
5& 316 & (-20,74,-63) & (-20.00001, 74.00001, -63.00000) \\
\hline	
6& 1000 & (424,-609,61) & (424.00000, -609.00000, 61.00000) \\
\hline
\end{array}
\end{equation*}

Consistent with our observations above, this data indicates that the error in
approximating $\bm{n}(u_1(t))$ by  $tg_3(\bm{\theta}\log t,\{\bm{\beta}\log t\})$
is roughly on the order of magnitude of $1/t^2$. 
In conclusion, this is an example in which the frequencies with which the elements of $\S$ appear in Theorem A'
are determined quasiperiodically by a linear flow on a two dimensional torus with flow direction determined by $\bm{\theta}$ and a linear flow on $[0,1]^2$ with flow direction determined in $\bm {\beta}$.

\vspace{.15in}

{\footnotesize
\noindent
AH: Department of Mathematics,\\
University of Houston,\\
Houston, TX, United States.\\
haynes@math.uh.edu\\

\noindent
RR: Department of Mathematical Sciences\\
Indiana University--Purdue University Indianapolis,\\
Indianapolis, IN, United States.\\
rroeder@math.iupui.edu
}

\end{document}